\documentclass[reqno]{amsart}
\usepackage{amscd,amssymb,amsmath,amsthm}
\usepackage{psfrag}
\usepackage{color,xcolor}
\usepackage{hyperref}
\usepackage{caption,subcaption}
\usepackage[percent]{overpic}
\usepackage[textwidth=6in, textheight=8.5in, centering]{geometry}
\parskip=.05in

\def\cM{\mathcal{M}}
\def\cF{\mathcal{F}}
\def\cS{\mathcal{S}}

\def\cB{\mathcal{B}}

\def\cW{\mathcal{W}}

\def\cR{\mathcal{R}}

\def\bZ{\mathbb{Z}}

\def\eps{\varepsilon}

\def\ds{\displaystyle}

\newtheorem{thm}{Theorem}
\newtheorem{lem}{Lemma}

\newtheorem{pro}{Proposition}

\theoremstyle{definition}
\newtheorem{remark}{Remark}
\newtheorem{definition}{Definition}
\newtheorem{conj}{Conjecture}
%%%%%%%%%%%%%%New abbreviations
\def\beq{\begin{equation}}
\def\eeq{\end{equation}}
%%%%%%%%%%%%%%%%%%%%%%%%%%%%%%%%%%%PAPER%%%%%%%%%%%%%%%%%%%%%%%%%%%
\numberwithin{equation}{section}

\begin{document}

\title[Hyperbolicity of  asymmetric lemon billiards]
{On another edge of defocusing: hyperbolicity of asymmetric lemon billiards}

\author{Leonid Bunimovich}
\address{School of Mathematics, Georgia Institute of Technology, Atlanta, GA 30332}
\email{bunimovh@math.gatech.edu}

\author{Hong-Kun~Zhang}
\address{Department of Mathematics and Statistics, UMass Amherst, Amherst, MA 01003}
\email{hongkun@math.umass.edu}

\author{Pengfei~Zhang}
\address{Department of Mathematics, University of Houston, Houston, TX 77004}
\email{pzhang@math.uh.edu}

\subjclass[2000]{37D50}

\keywords{Chaotic billiards, defocusing mechanism, hyperbolicity, asymmetric
lemons, curvature, continued fractions, first return map}

\begin{abstract}
Defocusing mechanism provides a way to construct chaotic (hyperbolic)
billiards with focusing components by separating all regular components of
the boundary of a billiard table sufficiently {\it far away} from each
focusing component. If all focusing components of the boundary of the
billiard table are circular arcs, then the above separation requirement
reduces to that all circles obtained by completion of focusing components are
contained in the billiard table. In the present paper we demonstrate that a
class of convex tables--{\it asymmetric lemons}, whose boundary consists of
two circular arcs, generate hyperbolic billiards. This result is quite
surprising because the focusing components of the asymmetric lemon table are
{\it extremely close} to each other, and because these tables are
perturbations of the first convex ergodic billiard constructed more than
forty years ago.
\end{abstract}

\maketitle

%\tableofcontents

\section{Introduction}

Billiards are dynamical systems generated by the motion of a point particle
along the geodesics on a compact Riemannian manifold $Q$ with boundary. Upon
hitting the boundary of $Q$, the particle changes its velocity according to
the law of elastic reflections. The studies of chaotic billiard systems were
pioneered by Sina\v{\i} in his seminal paper \cite{Si70} on dispersing
billiards. A major feature of billiards which makes them arguably the most
visual dynamical systems is that all their dynamical and statistical
properties are completely determined by the shape of the billiard table $Q$
and in fact by the structure of the boundary $\partial Q$.

Studies of convex billiards which started much earlier demonstrated that the
convex billiards have  regular dynamics and are even integrable. Such
examples are billiards in circles or in squares, which everybody  studied
(without knowing that they study billiards) in a middle or in a high school.
Jacobi proved integrability of billiards in ellipses by introducing
elliptical coordinates in which the equations of motion are separated.
Birkhoff conjectured that ellipses are the only integrable two dimensional
smooth convex tables which generate completely integrable billiards. Later
Lazutkin \cite{La} proved that all two-dimensional convex billiards with
sufficiently smooth boundary admit caustics and hence they can not be ergodic
(see also \cite{Dou82}).

The first examples of hyperbolic and ergodic billiards with dispersing as
well as with focusing components were constructed in \cite{Bu74A}. A closer
analysis of these examples allowed one to realize that there is another
mechanism of chaos (hyperbolicity)  than the mechanism of dispersing which
generates hyperbolicity in dispersing billiards.  This makes it possible to
construct hyperbolic and ergodic billiards which do not have dispersing
components on the boundary \cite{Bu74B,Bu79}. Some billiards on convex tables
also belong to this catergory. The {\it first} one was a table with boundary
component consisting of a major arc and a chord connecting its two end
points. Observe that this billiard is essentially equivalent to the one
enclosed by two circular arcs symmetric with respect to the cutting chord.
This billiard belongs to the class of (chaotic) flower-like billiards. The
boundaries of these tables have the smoothness of order $C^0$. The stadium
billiard (which became strangely much more popular than the others) appeared
as one of many examples of convex ergodic billiards with smoother ($C^1$)
boundary. Observe that in a flower-like billiard, all the circles generated
by the corresponding petals (circular arcs) completely lie within the
billiard table (flower).

In the present paper we consider (not necessarily small) perturbations of the
first class of chaotic focusing billiards, i.e., with boundary made of a
major arc and a chord.  While keeping the major arc, we replace the chord (a
neutral or zero curvature component of the boundary) by a circular arc with
smaller curvature. This type of billiards were constructed in \cite{{CMZZ}},
with certain numerical results. We prove rigourously that the corresponding
billiard tables generate hyperbolic billiards  under the conditions that the
chord is not too long, and the new circular arc has sufficiently small
curvature. More precisely, we assume that the length of the chord does not
exceed the radius of the circular component of the boundary, see Theorem
\ref{main}. This condition  is a purely technical one, and we conjecture that
the hyperbolicity holds without this restriction.

It is worthwhile to recall that defocusing mechanism of chaos was one of a
few examples where discoveries of new mechanisms/laws of nature were made in
mathematics rather than in physics. No wonder that physicists did not believe
that, even though rigorous proofs were present, until they check it
numerically. After that, stadia and other focusing billiards were built in
physics labs over the world.  However, intuitive ``physical" understanding of
this mechanism always was that defocusing must occur after any reflection off
the focusing part of the boundary. Indeed, there are just a few very special
classes of chaotic (hyperbolic) billiards where this condition was violated
(see e.g. \cite{BD}). However, these billiards were specially constructed to
get hyperbolic billiards. To the contrary, hyperbolicity of asymmetric lemons
came as a complete surprise to everybody. This class of chaotic billiards
forces physicists as well as mathematicians to reconsider their understanding
of this fundamental mechanism of chaos.

Our billiards can also be viewed as far-reaching generalizations of classical
lemon-type billiards. The lemon billiards were introduced by Heller and
Tomsovic \cite{HeTo} in 1993, by taking the intersection of two unit disks,
while varying the distance between their centers, say $b$. This family of
billiards have been extensively studied numerically in physics literature in
relation with the problems of quantum chaos (see \cite{MHA,ReRe}). The
coexistence of the elliptic islands and chaotic region has also been observed
numerically for all of the lemon tables as long as $b\neq 1$. Therefore, the
lemon table with $b=1$ is the {\it only} possible billiard system with
complete chaos in this family. See also \cite{LMR99,LMR01} for the studies of
classical and quantum chaos of lemon-type billiards with general quadric
curves.

\begin{figure}[h]
\begin{overpic}[width=2.5in]{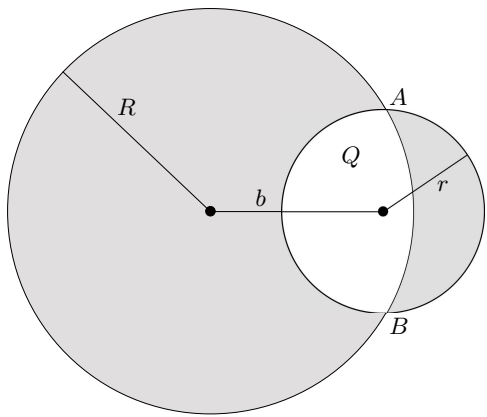}
\put(70,53){\small$Q$}
\put(90,47){\small$r$}
\put(23,63){\small$R$}
\put(52,44){\small$b$}
\put(80,65){\small$A$}
\put(80,17){\small$B$}
\end{overpic}
\caption{Basic construction of an asymmetric lemon table $Q(b,R)$. }
\label{basic}
\end{figure}

The lemon tables were embedded into a 3-parameter family--the {\it asymmetric
lemon billiards} in \cite{CMZZ}, among which the ergodicity is {\it no
longer} an exceptional phenomenon. More precisely, let $Q(r,b,R)$ be the
billiard table obtained as the intersection of a disk $D_r$ of radius $r$
with another disk $D_R$ of radius $R> r$, where $b>0$ measures the distance
between the centers of these two disks (see Fig.~\ref{basic}). Without loss
of generality, we will assume $r=1$ and denote the lemon table by
$Q(b,R)=Q(1,b,R)$. Restrictions on $b$ and $R$ will be specified later on to
ensure the hyperbolicity of the billiard systems on these asymmetric lemon
tables. On one hand, these billiard tables have extremely simple shape, as
the boundary of the billiard table $Q(b,R)$ only consists of two circular
arcs. Yet on the other hand, these systems already exhibit rich dynamical
behaviors, as it has been numerically observed in \cite{CMZZ} that there
exists an infinite strip $\mathcal{D}\subset [1,\infty)\times [0,\infty)$,
such that for any $(b,R)\in \mathcal{D}$, the billiard system on $Q(b,R)$ is
ergodic.

In this paper we give a rigorous proof of the hyperbolicity on a class of
asymmetric lemon billiards $Q(b,R)$. Our approach is based on the analysis of
continued fractions generated by the billiard orbits, which were introduced
by Sina\v{\i} \cite{Si70}, see also \cite{Bu74A}. Continued fractions are
intrinsic objects for billiard systems, and therefore they often provide
sharper results than those one gets by the abstract cone method, which deals
with hyperbolic systems of any nature and does not explore directly some
special features of billiards. In fact, already in the fundamental paper
\cite{Si70} invariant cones were immediately derived from the structure of
continued fractions generated by dispersing billiards. The study of
asymmetric lemon billiards demonstrates that defocusing mechanism can
generate chaos in much more general setting than it was thought before.

\subsection{Main results}

Let $b>0$ and $R>1$ be two positive numbers, $Q=Q(b,R)$ be the asymmetric
lemon table obtained by intersecting the unit disc $D_1$ with $D_R$, where
$b>0$ measures the distance between the two centers of $D_1$ and $D_R$. Let
$\Gamma=\partial Q$ be the boundary of $Q$, and $\Gamma_1$ be the circular
boundary component of $Q$ on the disk $D_1$, and $\Gamma_R$ be the circular
boundary component of $Q$ on the the disk $D_R$. Let $A$ and $B$ be the
points of intersection of $\Gamma_1$ and $\Gamma_R$, whom we will call the
{\it corner} points of $Q$. It is easy to see the following two extreme
cases: $Q(b,R)=D_1$ when $b\le R-1$, and $Q(b,R)=\emptyset$ when $b\ge 1+R$.
So we will assume $b\in (R-1,R+1)$ for the rest of this paper.

We first review some properties of periodic points of the billiard system on
$Q(b,R)$. It is easy to see that there is no fixed point, and exactly one
period 2 orbit colliding with both arcs\footnote{There are some other period
2 orbits which only collide with $\Gamma_1$. These orbits are parabolic.},
say $\mathcal{O}_2$, which moves along the segment passing through both
centers. The following result is well known, see \cite{Woj86} for example.
\begin{lem}%\label{prordR}
The orbit $\mathcal{O}_2$ is hyperbolic if $1<b<R$, is parabolic if $b=1$ or
$b=R$, and is elliptic if $b<1$ or $b>R$.
\end{lem}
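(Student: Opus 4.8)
The plan is to exhibit $\mathcal{O}_2$ explicitly and then reduce the statement to computing the trace of its linearized return map. First I would place the center $O_1$ of $D_1$ at the origin and the center $O_2$ of $D_R$ at $(b,0)$, so that the common axis of the two disks is the $x$-axis. Along this axis the boundary arcs meet the axis at $P_1=(1,0)\in\Gamma_1$ and $P_R=(b-R,0)\in\Gamma_R$, and the hypothesis $R-1<b<R+1$ guarantees that both points indeed lie on the respective arcs. The segment $P_1P_R$ is orthogonal to both circles, so it is exactly the orbit $\mathcal{O}_2$: its two collisions take place on focusing arcs with radii of curvature $1$ and $R$, and the free path between consecutive collisions has length $\tau=R+1-b$.

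Next I would write the derivative of the billiard map around one period of $\mathcal{O}_2$ as a product of Jacobi-field matrices in $SL(2,\mathbb{R})$. At normal incidence the free flight of length $\tau$ contributes $L_\tau=\left(\begin{smallmatrix}1 & \tau\\ 0 & 1\end{smallmatrix}\right)$, while a reflection off a focusing arc of radius of curvature $\rho$ contributes $V_\rho=\left(\begin{smallmatrix}1 & 0\\ -2/\rho & 1\end{smallmatrix}\right)$. Hence the monodromy matrix is $M=V_1\,L_\tau\,V_R\,L_\tau$, and a direct computation yields
\[
\mathrm{tr}\,M \;=\; 4\,g_1 g_2-2,\qquad g_1=1-\tau=b-R,\quad g_2=1-\frac{\tau}{R}=\frac{b-1}{R}.
\]
The appearance of $g_1,g_2$ is the familiar stability condition $0\le g_1g_2\le 1$ for an optical resonator formed by two facing mirrors, which serves as a useful check.

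Since $M\in SL(2,\mathbb{R})$, the orbit $\mathcal{O}_2$ is hyperbolic, parabolic, or elliptic according to whether $|\mathrm{tr}\,M|$ is larger than, equal to, or smaller than $2$; writing $P:=g_1g_2=(b-R)(b-1)/R$, these three cases become $P<0$ or $P>1$, then $P\in\{0,1\}$, then $0<P<1$. The proof then ends with an elementary sign analysis of $P$ on $b\in(R-1,R+1)$: one has $P<0$ exactly when $1<b<R$ (hyperbolicity), $P=0$ exactly when $b=1$ or $b=R$ (parabolicity), and in the complementary ranges $b<1$ or $b>R$ one has $P>0$ together with $P<1$ (ellipticity). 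I expect the only points requiring care to be the sign conventions in $V_\rho$—so that both arcs are correctly counted as focusing—and the verification in the elliptic cases that $P$ stays strictly below the upper end $1$ of the stability band rather than merely above $0$; this last bound uses $b<R+1$ and $R>1$.
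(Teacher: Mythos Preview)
Your argument is correct. The paper itself does not give a proof of this lemma; it only states that the result is well known and cites Wojtkowski \cite{Woj86}. Your computation is exactly the standard one that reference would supply: write the linearized return map as the product $V_1L_\tau V_R L_\tau$ in Jacobi (optical) coordinates, obtain $\mathrm{tr}\,M=4g_1g_2-2$ with $g_1=1-\tau$ and $g_2=1-\tau/R$, and read off the trichotomy from the resonator stability condition $0\le g_1g_2\le 1$. The identification of $\mathcal{O}_2$ with the segment between $(1,0)\in\Gamma_1$ and $(b-R,0)\in\Gamma_R$, the value $\tau=R+1-b$, the sign convention for $V_\rho$ on focusing arcs, and the endpoint checks ($P<1$ on both elliptic ranges, using $b<R+1$ and $R>1$) are all handled correctly.
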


It has been observed in \cite{CMZZ} that under the condition $b<1$ or $b>R$,
$\mathcal{O}_2$ is actually nonlinearly stable (see also \cite{OP05}). That
is, the orbit $\mathcal{O}_2$ is surrounded by some islands. Therefore, the
following is a necessary condition such that the billiard system on $Q(b,R)$
is hyperbolic.

\vskip.1in

\noindent\textbf{(A0)} The parameters $(b,R)$ satisfy $\max\{R-1,1\}< b< R$.

\vskip.1in

In this paper we prove that the billiard system on $Q(b,R)$ is completely
hyperbolic under the assumption \textbf{(A0)} and some general assumptions
\textbf{(A1)}--\textbf{(A3)}. As these assumptions are rather technical, we
will state them in Section \ref{assumption}.

\begin{thm}\label{hyper}
Let $Q(b,R)$ be an asymmetric lemon table satisfying the assumptions
\textbf{(A0)}--\textbf{(A3)}. Then the billiard system on $Q(b,R)$ is
hyperbolic.
\end{thm}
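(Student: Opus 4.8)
The plan is to establish hyperbolicity by constructing an invariant cone field in the tangent bundle of the billiard map and verifying that it is strictly preserved along orbits, using the continued-fraction formalism advertised in the introduction rather than the abstract cone method. Since the boundary of $Q(b,R)$ consists of only two focusing arcs $\Gamma_1$ and $\Gamma_R$, the natural first reduction is to pass to a \emph{first return map}: I would single out the collisions with one distinguished arc (most likely the small arc $\Gamma_R$ of large radius $R$, which carries the weak defocusing) and treat the excursions between consecutive such collisions as a single composed map. Between two visits to $\Gamma_R$ the orbit bounces some number $k\ge 0$ of times on the major arc $\Gamma_1$, and the total return map is a composition of the $\Gamma_1$-reflections with the free-flight transfer matrices. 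The point of working with continued fractions is precisely that this composition of focusing reflections and free flights is encoded by a continued fraction whose partial quotients are the reflection curvatures $2/(\ell\cos\varphi)$ and the inter-collision arc lengths, and hyperbolicity is read off from positivity/sign properties of the convergents.

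The key steps, in order, are as follows. First I would set up the coordinates and the Jacobi/transfer-matrix description of the billiard differential in terms of the curvature $\kappa$ of the boundary, the reflection angle $\varphi$, and the free path length $\tau$, so that each reflection contributes a factor of the form $\left(\begin{smallmatrix}1&0\\ 2\kappa/\cos\varphi & 1\end{smallmatrix}\right)$ and each flight a factor $\left(\begin{smallmatrix}1&\tau\\ 0&1\end{smallmatrix}\right)$, and convert the product into the associated continued fraction. Second, using assumption \textbf{(A0)} ($\max\{R-1,1\}<b<R$) together with the technical assumptions \textbf{(A1)}--\textbf{(A3)}, I would derive uniform geometric bounds on the return map: bounds on how many consecutive reflections can occur on the major arc $\Gamma_1$, on the admissible range of angles, and on the free-flight lengths between collisions, with particular care at orbits passing near the corner points $A,B$ and near the hyperbolic period-two orbit $\mathcal{O}_2$. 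Third, I would define an explicit invariant cone (in terms of the sign of the relevant continued-fraction quantity, e.g.\ requiring the post-reflection slope to lie in a cone determined by the defocusing geometry) and prove that the return map maps this cone strictly inside itself, with strict expansion; by the standard continued-fraction criterion this yields nonvanishing Lyapunov exponents almost everywhere and hence hyperbolicity.

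The main obstacle, I expect, is the verification of \emph{strict} cone invariance for the excursions that bounce many times on the major arc $\Gamma_1$ before returning to $\Gamma_R$. Because the two arcs are ``extremely close'' (as the abstract stresses), the defocusing gained on a single free flight is small, and the usual sign argument that makes the defocusing mechanism work after each focusing reflection can fail: one must show that the \emph{cumulative} effect over a whole excursion, as captured by the continued fraction, still produces net expansion even when individual segments do not. This is exactly where the continued-fraction bookkeeping should outperform a crude cone estimate, since it lets one control long focusing strings at once; quantitatively, the restriction $b<R$ (equivalently, the chord not exceeding the radius of $\Gamma_R$, which is the hypothesis singled out in the introduction) is what I would expect to be forced here, to guarantee that the convergents of the continued fraction stay on the correct side. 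Handling the borderline trajectories near the corners $A,B$—where $\cos\varphi\to 0$ and the reflection factor blows up—will require a separate, careful argument (likely showing such orbits either enter the cone after finitely many steps or form a measure-zero invariant set), and I anticipate that assumptions \textbf{(A1)}--\textbf{(A3)} are tailored precisely to make these estimates uniform.
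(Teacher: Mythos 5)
Your overall strategy---pass to an induced map, encode the differential of the composed map as a continued fraction, and exhibit a strictly invariant cone---is indeed the skeleton of the paper's argument, and you have correctly located the hard part (long strings of nearly tangential reflections on $\Gamma_1$, where per-reflection defocusing fails). But there is a genuine gap at exactly that point: the induced map you propose, namely the first return to the distinguished arc $\Gamma_R$ (equivalently, treating each complete excursion on $\Gamma_1$ as one block), is precisely the map $\hat F$ on $\hat\cM_1$ that the paper constructs in \S 3.1 and then \emph{abandons}. After the standard reduction of a string of $j$ reflections on a circle, the continued fraction for $\hat F$ has entries $\hat\tau_k=\tau_k-j_k\hat d_k-j_{k+1}\hat d_{k+1}$ which have no definite sign, so the Seidel--Stern-type convergence/sign argument you are counting on in your third step does not go through; the "cumulative effect over a whole excursion" cannot be controlled by bookkeeping alone for this choice of section. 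The missing idea is the paper's second induced map: one returns not to the first collision of each $\Gamma_1$-string but to its \emph{middle}, i.e.\ to $M=\bigcup_{n\ge 0}\cF^{\lceil n/2\rceil}M_n$, so that each return sees the tail half of one sliding string and the head half of the next. This symmetrized decomposition is what makes the reduced continued fraction (Lemma~\ref{reduce2}) amenable to the sign analysis of Proposition~\ref{suffness}, and it is where assumptions \textbf{(A1)}--\textbf{(A3)} actually enter (Proposition~\ref{returnf}), via a case analysis on $i_1$ and $i_2$ rather than on corner trajectories per se.

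Two smaller points. First, the cone you gesture at is realized concretely in the paper as the interval of pre-reflection curvatures between the parallel beam $\cB(V^p_x)=0$ and the beam $\cB(V^d_x)=1/d(x)$ focusing at the midpoint of the chord; strict nesting of these cones under $DF$, combined with the order-preservation of Proposition~\ref{order} and the time-reversal symmetry $\Phi$, yields the measurable splitting $E^u\oplus E^s$. Second, your plan omits the final (routine but necessary) step of transferring positivity of the Lyapunov exponent from the induced map $F$ back to $\cF$, which the paper does via integrability of the return time $\xi_M$. Neither of these is a fatal omission, but the choice of section is: without the mid-string induced map your step three would stall.
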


The proof of Theorem \ref{hyper} is given in Section \ref{provehyper}.

To provide more intuitions for these conditions,  we consider a special class
of asymmetric lemon billiards.  We first cut the unit disk $D_1$ by a chord
with end point $A$ and $B$, and let $\Gamma_{1}$ be the major arc of the unit
circle with end points $A,B$. Denoted by $Q_0$ the larger part of the disk
whose boundary contains $\Gamma_1$. By the classical defocusing mechanism,
the billiard on $Q_0$ is hyperbolic and ergodic. Now replace the chord with a
circular arc on the circle $D_R$, for some large radius $R$. Note that the
distance between the two centers is given by
$b=(R^2-|AB|^2/4)^{1/2}-(1-|AB|^2/4)^{1/2}$. The resulting table
$Q(R):=Q(b,R)$ can be viewed as a perturbation of $Q_0$. The following
theorem shows that the billiard system on $Q_0$, while being nonuniformly
hyperbolic, is robustly hyperbolic under suitable perturbations.

\begin{thm}\label{main}
Let $\Gamma_{1}$ be the major arc of the unit circle whose end points $A,B$
satisfy $|AB|<1$. Then there exists $R_\ast>1$ such that for
each $R\ge R_\ast$, the billiard system on the table $Q(R)$ with two
corners at $A,B$ is hyperbolic.
\end{thm}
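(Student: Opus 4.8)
The plan is to reduce Theorem~\ref{main} to the general hyperbolicity criterion of Theorem~\ref{hyper} by verifying that, for all sufficiently large $R$, the asymmetric lemon table $Q(R)=Q(b,R)$ satisfies the assumptions \textbf{(A0)}--\textbf{(A3)}. Since $R$ here is not a free parameter but is tied to $b$ through the relation $b=(R^2-|AB|^2/4)^{1/2}-(1-|AB|^2/4)^{1/2}$ forced by fixing the chord $AB$, the whole argument is a one-parameter asymptotic analysis as $R\to\infty$. So my first step is to record the asymptotics of $b$ in $R$: writing $\ell=|AB|$ and expanding the square root, $b=R\sqrt{1-\ell^2/(4R^2)}-\sqrt{1-\ell^2/4}=R-\sqrt{1-\ell^2/4}+O(R^{-1})$. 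In particular $b=R-\sqrt{1-\ell^2/4}+o(1)$, so $b<R$ for all large $R$, and since $\sqrt{1-\ell^2/4}<1$ we also get $b>R-1$ and $b>1$ eventually; this is exactly what is needed to confirm \textbf{(A0)}, namely $\max\{R-1,1\}<b<R$, for $R$ large.

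Next I would verify the technical assumptions \textbf{(A1)}--\textbf{(A3)} in the same asymptotic regime. The key geometric point is that as $R\to\infty$ the arc $\Gamma_R$ flattens to the chord $AB$ with curvature $1/R\to0$, so the perturbed table $Q(R)$ converges (in the appropriate $C^0$ or Hausdorff sense on the boundary pieces) to the limiting table $Q_0$ bounded by the major arc $\Gamma_1$ and the chord. The hypothesis $|AB|<1$ guarantees the chord is shorter than the radius $r=1$ of the disk carrying the major arc, which should be precisely the quantitative input that makes the assumptions hold in the limit. I would therefore check each of \textbf{(A1)}--\textbf{(A3)} by showing that its defining inequalities hold strictly for the limiting configuration $Q_0$ (where the relevant curvature is $0$), and then invoke continuity of all the quantities entering those assumptions as functions of the curvature $1/R$ to conclude that the strict inequalities persist for $R$ large enough; this produces the threshold $R_\ast$. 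Once \textbf{(A0)}--\textbf{(A3)} are all in force for $R\ge R_\ast$, Theorem~\ref{hyper} immediately yields hyperbolicity of the billiard on $Q(R)$, completing the proof.

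The main obstacle will be the verification of \textbf{(A1)}--\textbf{(A3)} in the limiting case, since these assumptions are stated only later (Section~\ref{assumption}) and presumably involve delicate estimates on the continued fractions or curvature quantities along orbits that interact with both components. The subtlety is that these conditions are most likely expressed in terms of the curvature $1/R$ of $\Gamma_R$ being \emph{positive} (a genuine arc, not a chord), so at the limiting value $1/R=0$ some of them may degenerate or hold only as non-strict equalities; in that case a naive continuity argument fails and I would instead need a first-order expansion in $1/R$ to see that the correct inequality opens up for small positive curvature. Concretely, the danger is that the condition $b<R$ becomes tight as $R\to\infty$ (the gap $R-b\to\sqrt{1-\ell^2/4}$ stays bounded away from $0$, which is reassuring, but other assumptions may behave worse), so I would pay particular attention to which assumptions are uniform in $R$ and which require the explicit dependence $b=b(R)$ rather than treating $b,R$ as independent. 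A clean way to handle this is to parametrize everything by $\kappa=1/R\in[0,\kappa_0)$, write the assumptions as continuous (indeed smooth) functions of $\kappa$ with $Q_0$ corresponding to $\kappa=0$, and establish the required strict inequalities at $\kappa=0$ using the classical defocusing hyperbolicity of $Q_0$ together with $|AB|<1$; the threshold $R_\ast=1/\kappa_0$ then emerges from the size of the neighborhood of $\kappa=0$ on which all finitely many inequalities remain strict.
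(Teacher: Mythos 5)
Your overall strategy---reduce to Theorem \ref{hyper} by verifying \textbf{(A0)}--\textbf{(A3)} for all large $R$, with \textbf{(A0)} following from the asymptotics of $b(R)$---is exactly the paper's, and your treatment of \textbf{(A0)} is fine. The gap is in your plan for \textbf{(A1)}--\textbf{(A3)}. These are not ``finitely many inequalities'' depending continuously on $\kappa=1/R$: they are families of inequalities indexed by the phase points $x\in M$, involving the integer-valued, unbounded, discontinuous quantities $i_0,i_1,i_2$ and the quantities $\tau_k,d_k$, which degenerate near grazing collisions. Worse, the critical case has no counterpart at $\kappa=0$: condition \textbf{(A1)} governs the set $X_0=\{i_1\ge 1\}$ of orbits that reflect two or more consecutive times on $\Gamma_R$ (nearly tangent, $d_1=R\cos\varphi_1$ small even though $R$ is huge), a positive-measure phenomenon that simply does not exist on the limiting table $Q_0$ whose second boundary piece is a flat chord. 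So there is nothing to ``verify at $\kappa=0$ and perturb''; a continuity or first-order-in-$\kappa$ argument cannot produce \textbf{(A1)}, and it also cannot give the uniformity over $x$ needed in \textbf{(A2)}--\textbf{(A3)} for orbits with small $d_1$.

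What the paper actually does at this point is a quantitative geometric argument rather than a perturbative one. It proves a lemma (Lemma \ref{largeR}) showing that for $R$ large, any orbit hitting $\Gamma_R$ with $d_1\le 4$ must enter and leave $\Gamma_R$ in a nearly vertical direction with reflection points on $\Gamma_1$ that are $\epsilon$-close to the corners $A,B$, and with total transit length at most $|AB|+\epsilon$. The hypothesis $|AB|<1$ is then used for a purpose quite different from the one you ascribe to it: it forces $\angle AOB<\pi/3$, hence $n^\ast\ge 6$, so that such an orbit makes at least $5$ subsequent reflections on $\Gamma_1$; this yields $i_0\ge 2$, $i_2\ge 3$, hence the factors $1-\frac{1}{2(1+i_0)}\ge\frac56$ and $1-\frac{1}{2i_2}\ge\frac56$, and the inequalities in \textbf{(A1)}--\textbf{(A2)} are then checked by explicit numerical estimates ($2d_0\ge 0.95|AB|$, $\tau_0+\tau_1+2i_1d_1\le 1.05|AB|$, etc.), while the case $d_1\ge 4$ and condition \textbf{(A3)} are trivial. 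You would need to supply an argument of this concrete, uniform kind; your proposal as written identifies the right target but proposes a method that fails on the orbits that matter most.
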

The proof of Theorem \ref{main} is given in Section \ref{provemain}.

\begin{remark}
The hyperbolicity of the billiard system guarantees that a typical
(infinitesimal) wave front in the phase space grows exponentially fast along
the iterations of the billiard map. Therefore,  one can say that these
billiards in Theorem \ref{main} still demonstrate the defocusing mechanism.
However, the circle completing each of the boundary arcs of the table
$Q(b,R)$ {\it contains} the entire table. Therefore, the defocusing mechanism
can generate hyperbolicity even in the case when the separation condition is
strongly violated.
\end{remark}

%Note that when $R$ is sufficiently large, the table $Q(R)$ can be viewed as a
%perturbation of the hyperbolic table $Q_0$.

The assumption $|AB|<1$ in Theorem \ref{main} is a purely technical one, and
it is used only once in the proof of Theorem \ref{main} to ensure $n^\ast\ge 6$
(see the definition of $n^\ast$ in \S \ref{provemain}). Clearly this
assumption $|AB|<1$ is stronger than the assumption that $\Gamma_1$ is a
major arc. We conjecture that as long as $\Gamma_1$ is a major arc, the
billiard system on $Q(R)$ is completely hyperbolic for any large enough
$R$.
\begin{conj}%\label{conj}
Fix two points $A$ and $B$ on $\partial D_1$ such that $\Gamma_1$ is a major
arc. Then  the billiard system on $Q(b,R)$ is hyperbolic if the center of the
disc $D_R$ lies out side of the table.
\end{conj}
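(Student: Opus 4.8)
The plan is to follow the continued-fraction strategy behind Theorems \ref{hyper} and \ref{main}: construct a measurable cone field on the collision space $\cM=\Gamma\times[-\pi/2,\pi/2]$ that is invariant under the billiard map and strictly expanded infinitely often along almost every orbit. I track the signed curvature $\cB$ of the infinitesimal orthogonal wave front; over a free path $\tau$ it evolves by $\cB\mapsto \cB/(1+\tau\cB)$, and at a reflection off a boundary arc of signed curvature $\cK$ at incidence angle $\varphi$ it jumps by $2\cK/\cos\varphi$. Hyperbolicity is then equivalent to the positivity, along almost every orbit, of the continued fraction assembled from the alternating free paths and reflection kicks, i.e. to the existence of an interval of $\cB$-values carried strictly inside itself by the one-step M\"obius transfer and strictly contracted under the inverse infinitely often. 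Writing $O_1,O_R$ for the centers of $D_1,D_R$, the natural cone is anchored at the period-two orbit $\mathcal{O}_2$: a resonator computation (radii $1$ and $R$, segment length $1+R-b$) gives its stability parameters $g_1=b-R$ and $g_2=(b-1)/R$. Since $\Gamma_1$ being a major arc forces $b<\sqrt{R^2-1}<R$ (a direct computation), the product $g_1g_2=(b-R)(b-1)/R$ is negative exactly when $b>1$, that is, exactly when $O_R$ lies outside $Q$; on the conjectured region $\mathcal{O}_2$ is therefore hyperbolic, matching the stability lemma, and its two invariant wave-front curvatures are real and distinct, serving as the endpoints of the cone.

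Next I would decompose each orbit along its reflections off $\Gamma_1$, so that between two consecutive $\Gamma_1$-reflections the orbit either makes one direct circle step or crosses the table and makes a maximal run of $n\ge1$ reflections off $\Gamma_R$. The in-arc dynamics on each circle is integrable: consecutive reflections share a common incidence angle $\varphi$, are joined by chords of constant length $2\rho\cos\varphi$, and carry a constant kick $2/(\rho\cos\varphi)$, so a run of $n$ reflections on the circle of radius $\rho\in\{1,R\}$ contributes the $n$-th iterate of a single elliptic M\"obius map $M_{\rho,\varphi}$, available in closed form. Composing these iterates with the crossing free paths and with the strong kicks $2/\cos\varphi$ delivered by the unit arc then expresses the first-return transfer to $\Gamma_1$ as an explicit product of elementary factors. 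Because a pure single-arc run is only a neutral circle rotation, all the expansion must come from the $\Gamma_1$-reflections together with the long crossing chords; in particular the parabolic periodic orbits carried by $\Gamma_1$ (as in the footnote) must be shown to form a measure-zero set that a typical orbit escapes, after which it accrues expansion at every genuine defocusing crossing.

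The core estimate is that the anchored cone is strictly invariant under every return-block transfer and strictly expanded whenever the block crosses the table. Here the two hypotheses play complementary roles. The major-arc hypothesis is exactly what makes $\Gamma_1$ a net defocusing component, as in the classical petal $Q_0$: a crossing chord through the $D_1$-cap is long enough that the converging beam produced by the focusing kick $2/\cos\varphi$ of the unit arc passes its focus and re-emerges diverging. The hypothesis $O_R\notin Q$, i.e. $b>1$, governs the opposite arc: it is the sign condition ensuring that the weak-focusing runs $M_{R,\varphi}^{\,n}$ never rotate the front across the cone boundary, for any run length $n$ and any admissible $\varphi$, so that the small kicks $2/(R\cos\varphi)$ cannot accumulate into a net reconvergence. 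Concretely I would reduce each invariance step to a scalar inequality in the $\cB$-variable and check that, after clearing denominators, it collapses to the pair of conditions $b>1$ and $\Gamma_1$ major; strict expansion would then follow by combining the long crossing chords with the unit strength $\cK=1$ of the $\Gamma_1$-kick.

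The hard part is to make this uniform over the \emph{entire} range allowed by the hypotheses --- all $R$ with $O_R$ outside $Q$, equivalently all $b\in(1,\sqrt{R^2-1})$ --- rather than only the large-$R$ perturbative regime of Theorem \ref{main}. Two difficulties are genuinely non-perturbative. First, at moderate $R$ the per-collision focusing $1/R$ on $\Gamma_R$ is not small, so the runs $M_{R,\varphi}^{\,n}$ are honest elliptic rotations of definite angle, and one must control their cumulative rotation for all $n$ at once, showing it can never carry the front out of the cone; this is where the exact sign $b-1>0$ has to be used as a structural rather than asymptotic estimate, and it is precisely what the large-$R$ argument avoids. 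Second, the limit $b\to1^+$, in which $\mathcal{O}_2$ degenerates to a parabolic orbit, pinches the anchored cone onto a single direction and drives the per-block expansion to zero, so a fixed cone no longer works: one must either renormalize to an angle-dependent cone field or run a first-return argument that recovers a definite expansion over a bounded number of blocks at each fixed $b>1$, yielding hyperbolicity that is only non-uniform. Finally, removing the technical assumption $|AB|<1$ of Theorem \ref{main}, which was used only to force $n^\ast\ge6$, means the shortest excursions (small $n^\ast$) must be handled directly. I expect the simultaneous control of long $\Gamma_R$-runs at moderate $R$ together with the near-parabolic limit $b\to1^+$ to be the true crux of the conjecture.
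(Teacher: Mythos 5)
The statement you are addressing is stated in the paper as a \emph{conjecture}: the authors prove hyperbolicity only under the technical assumptions \textbf{(A0)}--\textbf{(A3)} (Theorem \ref{hyper}) and, for the family $Q(R)$, only when $|AB|<1$ and $R\ge R_\ast$ (Theorem \ref{main}); the general case is explicitly left open. Your proposal does not close it either: it is a program whose central estimates are deferred rather than proved. Your preliminary computations are correct and consistent with the paper --- the period-two segment has length $1+R-b$, the resonator parameters are $g_1=b-R$, $g_2=(b-1)/R$, the condition ``$O_R$ outside $Q$'' is exactly $b>1$, and ``$\Gamma_1$ major'' is exactly $b<\sqrt{R^2-1}$ --- but the decisive step, the claim that the cone-invariance inequality ``collapses to the pair of conditions $b>1$ and $\Gamma_1$ major,'' is asserted, not derived, and it is precisely the content of the conjecture. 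Note that even under the much stronger hypotheses \textbf{(A1)}--\textbf{(A3)}, the paper's verification (Propositions \ref{suffness} and \ref{returnf}) splits into many subcases and hinges on delicate inequalities such as $\frac{d_0}{1+i_0}<d_1$; there is no indication that the general case reduces to a single scalar inequality, and your own final paragraph concedes that the uniform control of the elliptic runs $M_{R,\varphi}^{\,n}$ for all $n$ at moderate $R$, the degeneration as $b\to 1^+$, and the small-$n^\ast$ excursions remain unresolved.

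Two structural choices in your plan are also problematic. First, you decompose orbits into blocks between consecutive series of $\Gamma_1$-reflections; this is exactly the induced map $\hat F$ on $\hat\cM_1$, whose reduced continued fraction \eqref{tradition} has entries $\hat\tau_k$ of no definite sign, so the Seidel--Stern criterion fails and the paper explicitly states that proving hyperbolicity for this section is ``rather difficult'' --- which is why the authors introduce instead the middle-reflection section $M=\bigcup_{n\ge0}\cF^{\lceil n/2\rceil}M_n$, splitting each $\Gamma_1$-run in half so that the resulting entries can be sign-controlled. Your plan revives the very obstruction the paper's construction was designed to avoid, without a substitute device. Second, anchoring the cone at the invariant curvatures of $\mathcal{O}_2$ differs from the paper's position-dependent cone $0<\cB<1/d(x)$ bounded by $V^p_x$ and $V^d_x$; a cone extracted from one hyperbolic periodic orbit does not automatically propagate to a field invariant on the whole section, and, as you note yourself, it pinches to a line as $b\to1^+$, at which point your proposed fixes (an angle-dependent renormalized cone field, or a bounded-block first-return expansion) are named but not constructed. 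In short, the proposal is a reasonable sketch of how one might attack the conjecture within the paper's continued-fraction framework, but the load-bearing lemmas are missing, so it does not constitute a proof of the statement --- which, as far as this paper is concerned, remains open.
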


To ease a task of reading  we provide hereby a list of notations that we use
in this paper.

\begin{center}
{The List of Notations}
\end{center}
\begin{flushleft}
\begin{tabular}{ l p{3.9in} }
$Q(r,b,R)=D_r\cap D_R$ & the asymmetric lemon table as the intersection of
$D_r$ with $D_R$. We usually set $r=1$ and denote it by $Q(b,R)=Q(b,1,R)$. We also
denote $Q(R)=Q(b,R)$ if $b$ is determined by $R$.\\

$\Gamma=\partial Q(b,R)$ & the boundary of $Q(b,R)$, which consists of two
arcs: $\Gamma_1$ and $\Gamma_R$.\\

$\cM=\Gamma\times [-\pi/2,\pi/2]$ & the phase space with coordinate
$x=(s,\varphi)$,
which consists of $\cM_1$ and $\cM_R$.\\

$\cF$ & the billiard map on the phase space $\cM$ of $Q(b,R)$.\\

$\cS_1$ & the set of points in $\cM$ at where $\cF$ is not well defined or
not
smooth.\\

$\cB^{\pm}(V)$ &  the curvature of the orthogonal transversal of the beam of
lines generated by a tangent vector $V\in T_x\cM$ before and after
the reflection at $x$, respectively.\\

$d(x)=\rho\cdot \cos\varphi$ & the half of the chord cut out by the
trajectory of the billiard orbit in the disk $D_\rho$, where $\rho\in\{1,R\}$
is given by $x\in
\cM_\rho$.\\

$\cR(x)=-\frac{2}{d(x)}$ & the reflection parameter that measures the
increment of the curvature after reflection.\\

$\tau(x)$ &  the distance between the current position of $x$ with the next
reflection with  $\Gamma$.\\

$\chi^{\pm}(\cF,x)$ & the Lyapunov exponents of $\cF$ at the point $x$.
\end{tabular}

\begin{tabular}{ l p{3.9in} }
$\eta(x)$ & the number of successive reflections of $x$ on the arc
$\Gamma_\sigma$, where $\sigma$ is given by $x\in\cM_\sigma$.\\

$\hat \cM_1=\cM_1\backslash \cF\cM_1$ & the set of points
that first enter $\cM_1$. Similarly we define $\hat \cM_R$.\\

$M_n=\{x\in\hat\cM_1:\eta(x)=n\}$ & the set of points in $\hat \cM_1$ having
$n$ reflections on $\Gamma_1$ before hitting $\Gamma_R$.\\

$\hat F(x)=\cF^{j_0+j_1+2}x$ & the first return map of $\cF$ on the subset
$\hat\cM_1$, where $j_0$ is the number of reflections of $x$ on $\Gamma_1$,
and $j_1$ is the number of reflections of $x_1=\cF^{j_0+1}x$ on $\Gamma_R$.\\

$\hat\tau_k=\tau_k-j_k\hat d_k-j_{k+1}\hat d_{k+1}$  & a notation for short,
where $\tau_k=\tau(x_k)$,
$d_k=d(x_k)$, $\hat d_k=\frac{d_k}{j_k+1}$.\\

$\hat \cR_k=-\frac{2}{\hat d_k}$  &  a notation for short.\\

$M=\bigcup_{n\ge 0}\cF^{\lceil n/2\rceil}M_n$ & a subset of $\cM_1$. Compare
with the set $\hat \cM_1=\bigcup_{n\ge 0}M_n$.\\

$F(x)=\cF^{i_0+i_1+i_2+2}x$ & the first return map of $\cF$ on $M$, where
$i_0$ is the number of reflections of $x$ on $\Gamma_1$, $i_1$ is the number
of reflections of $x_1=\cF^{i_0+1}x$ on $\Gamma_R$, and $i_2$ is the number
of
reflections of $x_2=\cF^{i_1+1}x_1$ on $\Gamma_1$ before entering $M$.\\

$\bar \tau_k=\tau_k-i_k\hat d_k-d_{k+1}$  & a notation for short. Only $\bar
\tau_1$ is used in this paper.
\end{tabular}
\end{flushleft}

\section{Preliminaries for general convex billiards}

Let $Q\subset \mathbb{R}^2$ be a compact convex domain with piecewise smooth
boundary, $\cM$ be the space of unit vectors based at the boundary
$\Gamma:=\partial Q$ pointing inside of $Q$. The set $\cM$ is endowed with
the topology induced from the tangent space $TQ$. A point $x\in\cM$
represents the initial status of a particle, which moves along the ray
generated by $x$ and then makes an elastic reflection after hitting $\Gamma$.
Denote by $x_1\in\cM$ the new status of the particle right after this
reflection. The billiard map, denoted by $\cF$, maps each point $x\in\cM$ to
the point $x_1\in\cM$. Note that each point $x\in\cM$ has a natural
coordinate $x=(s, \varphi)$, where $s\in [0,|\Gamma|)$ is the arc-length
parameter of $\Gamma$ (oriented counterclockwise), and $\varphi\in
[-\pi/2,\pi/2]$ is the angle formed by the vector $x$ with the inner normal
direction of $\Gamma$ at the base point of $x$. In particular, the phase
space $\cM$ can be identified with a cylinder $\Gamma\times[-\pi/2,\pi/2]$.
The billiard map preserves a smooth probability measure $\mu$ on $\cM$, where
$d\mu=(2|\Gamma|)^{-1}\cdot\cos\varphi\, ds\, d\varphi$.

For our lemon table  $Q=Q(b,R)$, the boundary $\Gamma=\partial Q$ consists of
two parts $\Gamma_1$ and $\Gamma_R$. Collision vector starting from a corner
point at $A$ or $B$ has $s$-coordinate $s=0$ or $s=|\Gamma_1|$, respectively.
Then we can  view $\cM$ as the union of two closed rectangles:
$$\cM_{1}:=\{(s,\varphi)\in\cM\,:\,0\le s \le |\Gamma_1|\}\,\,\,\,\,\,\text{ and
}\,\,\,\,\,\cM_{R}:=\{(s,\varphi)\in\cM\,:\,|\Gamma_1|\le s\le |\Gamma|\}.$$ For
any point $x=(s,\varphi)\in \cM$,  we define $d(x)=\cos\varphi$ if
$x\in\cM_1$, and $d(x)=R\cos\varphi$ if $x\in\cM_R$. Geometrically, the
quantity $2d(x)$ is the length of the chord in the complete disk ($D_1$ or
$D_R$) decided by the trajectory of $x$.

Let $\cS_0=\{(s,\varphi)\in\cM: s=0\text{ or  } s=|\Gamma_1|\}$ be the set of
post-reflection vectors $x\in\cM$ that pass through one of the corners $A$ or
$B$. We define $\cS_1=\cS_0\cup \cF^{-1}\cS_0$ as the set of points on which
$\cF$ is not well-defined. Note that $\cF^{-1}\cS_0$  consists of $4$
monotone curves $\varphi=\varphi_i(s)$ ($1\le i\le 4$) in $\cM$. Moreover, we
define $\cS_{-1}:=\cS_0\cup \cF\cS_0$. The set $\cS_{\pm 1}$ is called the
{\it singular set} of the billiard map $\cF^{\pm 1}$.

\begin{figure}[htb]
\centering
\begin{overpic}[width=85mm]{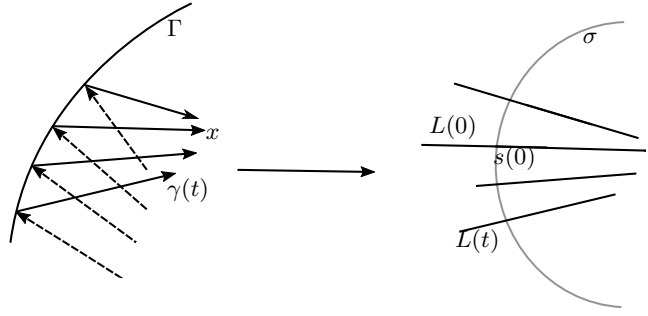}
\put(25,43){\small$\Gamma$}
\put(31,27){\small$x$}
\put(25,18){\small$\gamma(t)$}
\put(90,42){\small$\sigma$}
\put(66,28){\small$L(0)$}
\put(76,23){\small$s(0)$}
\put(70,10){\small$L(t)$}
\end{overpic}
\caption{\small{A bundle of lines generated by $\gamma$, and the cross-section $\sigma$.}}
\label{wavefront}
\end{figure}

A way to understand chaotic billiards lies in the study of infinitesimal
families of trajectories.  More precisely, let $x\in \cM\setminus\cS_1$,
$V\in T_x\cM$, $\gamma: (-\eps_0,\eps_0)\to\cM$, $t
\mapsto\gamma(t)=(s(t),\varphi(t))$ be a smooth curve for some $\eps_0>0$,
such that $\gamma(0)=x$ and $\gamma'(0)=V$. Clearly the choice of such a
smooth curve is not unique. Each point in the phase space $\cM\subset TQ$ is
a unit vector on the billiard table $Q$. Let $L(t)$ be the line that passes
through the vector $\gamma(t)$, see Fig. \ref{wavefront}. Putting these lines
together, we get a beam of post-reflection lines, say $\cW^{+}$, generated by
the path $\gamma$. Let $\sigma$ be the orthogonal cross-section of this
bundle passing through the point $s(0)\in\Gamma$. Then the post-reflection
curvature of the tangent vector $V$, denoted by $\cB^+(V)$, is defined as the
curvature of $\sigma$ at the point $s(0)$. Similarly we define the
pre-reflection curvature $\cB^{-}(V)$ (using the beam of dashed lines in Fig.
\ref{wavefront}).

Note that $\cB^{\pm}(V)$ depend only on $V$, and are independent of the
choices of curves tangent to $V$. These two quantities are related by the
equation
 \beq\label{fpm}
\cB^+(V)-\cB^-(V)=\cR(x),
 \eeq
where $\cR(x):=-2/d(x)$ is the {\it{reflection parameter}} introduced in
\cite{Si70}, see also \cite[\S 3.8]{CM06}. In fact, \eqref{fpm} is the
well-known {\it Mirror Equation} in geometric optics. Note that $\cR(x)>0$ on
dispersing components and $\cR(x)<0$ on focusing components  of the boundary
$\partial Q$. Since we mainly use $\cB^-(V)$ in this paper, we drop the minus
sign, simply denote it by $\cB(V)=\cB^-(V)$.

Let $\tau(x)$ be the distance from the current position of $x$ to the next
reflection with $\Gamma$. According to (\ref{fpm}), one gets the evolution
equation for the curvatures of the pre-reflection wavefronts of $V$ and its
image $V_1=D\cF(V)$ at $\cF x$:
 \beq\label{ff1}
\cB_1(V):=\cB(V_1)=\frac{1}{\tau(x)+\dfrac{1}{\cR(x)+\cB(V)}}.
 \eeq
More generally, let $x\in \cM\setminus \cS_1$ be a point with $\cF^k x\notin
\cS_1$ for all $1\le k\le n$, $V\in T_x\cM$ be a nonzero tangent vector, and
$V_{n}=D\cF^{n} V$ be its forward iterations. Then by iterating the formula
\eqref{ff1}, we get
\begin{align}
\cB(V_{n})&=\frac{1}{\tau(x_{n-1})+\dfrac{1}{\cR(x_{n-1})+\dfrac{1}{\tau(x_{n-2})
+\dfrac{1}{\cR(x_{n-2})+\dfrac{1}{\ddots+\dfrac{1}{\tau(x)+\dfrac{1}{\cR(x)+\cB(V)}}}}}}}
\label{ff2}
\end{align}
with $x_{k}=\cF^{k}x$. See also \cite[\S 3.8]{CM06} for Eq. \eqref{ff1} and
\eqref{ff2}.

\vskip.3cm

For convenience, we introduce the standard notations for continued fractions
\cite{Kh64}. In the following, we will denote $[a]:=\frac{1}{a}$. The reader
should not be confused by the integral part of $a$, which is never used in
this paper\footnote{We use the ceiling function $\lceil
t\rceil=\min\{n\in\mathbb{Z}: n\ge t\}$ in \S \ref{induced}, and the floor
function $\lfloor t\rfloor=\max\{n\in\mathbb{Z}: n\le t\}$ in \S
\ref{provemain}.}.
\begin{definition}
Let $a_n$, $n\ge0$ be a sequence of real numbers. The finite continued
fraction $[a_1,a_2,\cdots, a_n]$ is defined inductively by:
\begin{align*}
[a_1]=\frac{1}{a_1},\, [a_1,a_2]&=\frac{1}{a_1+[a_2]},\, \cdots,\,
[a_1,a_2,\cdots, a_n]=\frac{1}{a_1+[a_2,\cdots, a_n]}.
\end{align*}
Moreover, we denote $[a_0;a_1,a_2,\cdots, a_n]=a_0+[a_1,a_2,\cdots, a_n]$.
\end{definition}

Using this notation, we see that the evolution \eqref{ff2} of the curvatures
of $V_n=D\cF^n(V)$ can be re-written as
 \beq\label{ffn}
 \cB(V_n)=[\tau(x_{n-1}),\cR(x_{n-1}),\tau(x_{n-2}),
 \cR(x_{n-2}),\cdots,\tau(x),\cR(x)+\cB(V)].
 \eeq
Note that Eq. \eqref{ffn} is a recursive formula and hence can be extended
{\it formally} to an infinite continued fraction.

We will need the following basic properties of continued fractions to perform
some reductions. Let $x=[a_1,\cdots,a_n]$ be a finite continued fraction.
Then we can combine two finite continued fractions in the following ways:
\begin{align}
[b_1,\cdots,b_n+x]&=[b_1,\cdots,b_n,a_1,\cdots,a_n],\label{combine}\\
[b_1,\cdots,b_n,x]&=[b_1,\cdots,b_n+a_1,a_2,\cdots,a_n],\label{combine2}\\
[b_1,\cdots,b_n,0,a_1,\cdots,a_n]&=[b_1,\cdots,b_n+a_1,a_2,\cdots,a_n].\label{add0}
\end{align}

\begin{pro}\label{cont}
Suppose $a,b,c$ are real numbers such that $B:=a+c+abc\neq 0$. Then the
relation
\begin{align}
[\cdots,x,a,b,c,y,\cdots]&=[\cdots,x+A,B,C+y,\cdots]\label{ABC}
\end{align}
holds for any finite or infinite continued fractions, where $A=\frac{bc}{B}$
and $C=\frac{ab}{B}$.
\end{pro}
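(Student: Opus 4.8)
The plan is to reduce \eqref{ABC}, which is asserted for arbitrary (finite or infinite) continued fractions surrounding the block $x,a,b,c,y$, to a single algebraic identity in one variable. The key observation is that the entries displayed as ``$\cdots$'' to the left of $x$ and to the right of $y$ are \emph{identical} on the two sides of \eqref{ABC}; by the recursive definition of continued fractions they affect the total value only through a composition of M\"obius maps, which is injective. Hence it suffices to prove that the two tails beginning at $x$ agree, and this in turn involves only $a,b,c$ together with a single number encoding the common tail past $y$.

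To make this precise I would collapse the head and the tail using the combination rule \eqref{combine}. Let $h_1,\dots,h_k$ be the common (finite) head to the left of $x$, and let $g$ be the value of the common tail lying to the right of $y$, so that by \eqref{combine} the tail part is absorbed into the last displayed entry. Applying \eqref{combine} to collapse the tail and then once more to collapse everything after $x$ gives, on the left side of \eqref{ABC},
\[
[h_1,\dots,h_k,x,a,b,c,y,\dots]=[h_1,\dots,h_k,\,x+[a,b,c,y+g]],
\]
and, on the right side,
\[
[h_1,\dots,h_k,x+A,B,C+y,\dots]=[h_1,\dots,h_k,\,x+A+[B,C+(y+g)]].
\]
Since $P\mapsto[h_1,\dots,h_k,P]$ is an injective M\"obius function of $P$, the two sides of \eqref{ABC} coincide if and only if the inner values coincide, i.e.\ if and only if, writing $z:=y+g$,
\[
[a,b,c,z]=A+[B,C+z].
\]
Thus the whole statement collapses to this identity, which trades the three entries $a,b,c$ for the single entry $B$, in the one variable $z$ with $a,b,c$ as parameters.

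Finally I would verify this reduced identity by a direct expansion of both sides as M\"obius transformations of $z$. Using $B=a+c+abc$ one computes that the left side equals $\dfrac{(bc+1)z+b}{Bz+ab+1}$, and substituting $A=bc/B$ and $C=ab/B$ one checks that the right side is the same rational function, the relevant simplifications being $AB=bc$, $BC=ab$, and $ABC+A+C=b(abc+a+c)/B=b$. The hypothesis $B\neq0$ is precisely what makes $A$ and $C$ well defined and the collapsed entry $B$ admissible. The algebra itself is routine; the point that genuinely requires care is the reduction step, namely justifying that collapsing the tail via \eqref{combine} remains valid for infinite continued fractions (by passing to the limit of the finite truncations) and that the head map is actually injective, so that equality of the tails is necessary as well as sufficient.
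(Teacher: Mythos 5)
Your proof is correct. Note that the paper itself states Proposition \ref{cont} without proof, treating it as a standard fact about continued fractions (in the spirit of \cite{Kh64} and \cite[\S 8.7]{CM06}), so there is no in-paper argument to compare against; your write-up simply supplies the missing verification. The reduction to the one-variable identity $[a,b,c,z]=A+[B,C+z]$ via injectivity of the head as a M\"obius map and absorption of the tail by \eqref{combine} is the natural route, and the algebra checks out: both sides equal $\frac{(bc+1)z+b}{Bz+ab+1}$, using $AB=bc$, $BC=ab$, and $ABC+A+C=b(abc+a+c)/B=b$. Working projectively on $\mathbb{R}\cup\{\infty\}$, as you implicitly do, also disposes of the degenerate cases where an intermediate denominator vanishes, and the passage to infinite continued fractions via finite truncations is consistent with how the paper uses the identity (only finitely many entries are ever modified).
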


Let $Q$ be a bounded domain with piecewise smooth boundary, $\cF$ be the
billiard map on the phase space $\cM$ over $Q$. Then the limit $\ds
\chi^+(\cF,x)=\lim_{n\to\infty}\frac{1}{n}\log\|D_x\cF^n\|$, whenever it
exists, is said to be a {\it Lyapunov exponent} of the billiard map $\cF$ at
the point $x$. Since $\cF$ preserves the smooth measure $\mu$, the other
Lyapunov exponent at $x$ is given by $\chi^-(\cF,x)=-\chi^+(\cF,x)$. Then the
point $x$ is said to be hyperbolic, if $\chi^+(\cF,x)>0$. Moreover, the
billiard map $\cF$ is said to be (completely) {\it hyperbolic}, if
$\mu$-almost every point $x\in \cM$ is a hyperbolic point. By Oseledets {\it
Multiplicative Ergodic Theorem}, we know that $\chi^+(\cF,x)$ exists for
$\mu$-a.e. $x\in \cM$, and there exists a measurable splitting
$T_x\cM=E^u_x\oplus E^s_x$ over the set of hyperbolic points, see
\cite{CM06}.

It is well known that the hyperbolicity of a billiard map is related to the
{\it convergence} of the continued fraction given in Eq. \eqref{ffn} as
$n\to\infty$. In particular, the following proposition reveals the relations
between them. See \cite{Bu74A,CM06, Si70}.
\begin{pro}
Let $x\in \cM$ be a hyperbolic point of the billiard map $\cF$. Then the
curvature $\cB^u(x):=\cB(V^u_x)$ of a unit vector $V^u_x\in E^u_x$ is given
by the following infinite continued fraction:
$$\cB^u(x)=[\tau(x_{-1}),\cR(x_{-1}),\tau(x_{-2}),\cR(x_{-2}),\cdots,
\tau(x_{-n}),\cR(x_{-n}),\cdots].$$
\end{pro}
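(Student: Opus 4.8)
The plan is to prove that for a hyperbolic point $x$, the unstable curvature $\cB^u(x)$ admits the stated infinite continued fraction expansion. The starting observation is that an unstable vector $V^u_x \in E^u_x$ is characterized by the property that its \emph{backward} iterates contract: $\|D\cF^{-n}V^u_x\|\to 0$ exponentially. Equivalently, $V^u_x$ lies in the image $D\cF^n(T_{x_{-n}}\cM)$ for all $n$, so the unstable direction at $x$ is obtained as a limit of the forward-pushed tangent spaces from the distant past. Since the curvature functional $\cB$ records exactly the geometry of the pre-reflection wavefront through the base point, I would track how $\cB$ transforms under the forward iteration $\cF$ starting from $x_{-n}$ and arriving at $x$.

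First I would apply the evolution formula \eqref{ffn} along the orbit segment from $x_{-n}$ to $x$. Writing $V=D\cF^{-n}V^u_x \in T_{x_{-n}}\cM$ and pushing it forward $n$ steps to land at $x$, formula \eqref{ffn} gives
\beq
\cB(D\cF^n V)=[\tau(x_{-1}),\cR(x_{-1}),\tau(x_{-2}),\cR(x_{-2}),\cdots,\tau(x_{-n}),\cR(x_{-n})+\cB(V)],
\eeq
where the innermost entry carries the initial curvature $\cB(V)$ of the pushed-back vector. The key point is that $D\cF^nV$ is a positive multiple of $V^u_x$ (the unstable direction is one-dimensional and invariant), so $\cB(D\cF^nV)=\cB^u(x)$ for every $n$. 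Thus the left-hand side is a constant independent of $n$, while the right-hand side is the $n$-th convergent of the claimed infinite continued fraction, perturbed only in its last entry by the term $\cB(V)$.

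Next I would show that this perturbation washes out as $n\to\infty$, so that $\cB^u(x)$ equals the genuine infinite continued fraction. The mechanism is the standard contraction estimate for continued fractions with focusing-type entries: on the focusing arcs $\cR(x_{-k})<0$, but the composition of the Möbius maps $z\mapsto [\,\tau,\cR+z\,]$ along a hyperbolic orbit is eventually a uniform contraction in the appropriate (projective) metric, because hyperbolicity of $x$ is precisely the statement that the forward Lyapunov exponent is positive, equivalently that these linear-fractional maps contract backward wavefronts. Hence the dependence on the innermost entry $\cB(V)$ decays, the convergents form a Cauchy sequence, and their limit is the stated infinite continued fraction. Since the value $\cB^u(x)=\cB(D\cF^nV)$ does not depend on $n$, it must coincide with that limit.

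The main obstacle I expect is justifying the \emph{convergence} of the infinite continued fraction and the vanishing of the tail dependence on $\cB(V)$, since the entries $\cR(x_{-k})$ are negative on the focusing arcs and the individual Möbius maps need not be contractions step by step. The clean way around this is to invoke hyperbolicity directly: at a hyperbolic point the Oseledets splitting $T_x\cM=E^u_x\oplus E^s_x$ exists and is unique, so $V^u_x$ is well-defined up to scaling and its wavefront curvature $\cB^u(x)$ is a single finite number obtained as $\lim_n D\cF^n(T_{x_{-n}}\cM)$; the positivity of $\chi^+(\cF,x)$ forces the backward-contracting property that makes the convergents converge and makes the initial-entry correction negligible. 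I would carry out this convergence argument by comparing two convergents sharing all but the last entry and bounding their difference via the product of derivatives of the composed Möbius transformations, which is controlled by $e^{-n\chi^+(\cF,x)}$.
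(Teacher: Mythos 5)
The paper itself offers no proof of this proposition: it is quoted as a known result with citations to Sina\v{\i}, Bunimovich and Chernov--Markarian, so your argument can only be measured against the standard proofs in those references and against the variant the authors actually carry out for their induced map in \S\ref{provehyper}. Your overall strategy is the standard one: by Eq.~\eqref{ffn} applied along the orbit segment from $x_{-n}$ to $x$, the exact value $\cB^u(x)$ equals $[\tau(x_{-1}),\cR(x_{-1}),\dots,\tau(x_{-n}),\cR(x_{-n})+\cB(V^u_{x_{-n}})]$, while the $n$-th convergent of the claimed expansion is the same expression with seed $0$, i.e.\ $\cB(D\cF^n V^p_{x_{-n}})$; one must then show the seed-dependence washes out. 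The identity step is correct.

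The gap is in the washing-out step, which is the entire analytic content of the proposition. You propose to bound the difference of the two convergents by ``the product of derivatives of the composed M\"obius transformations, \dots controlled by $e^{-n\chi^+(\cF,x)}$.'' That derivative estimate holds only \emph{at} (or projectively near) the unstable seed $\cB^u(x_{-n})$, where it is comparable to $\|D\cF^n|_{E^u_{x_{-n}}}\|^{-2}$. To compare the values at the seeds $0$ and $\cB^u(x_{-n})$ you need control of the derivative on the whole interval between them, and the composed M\"obius map $\Phi_n$ has a pole at $z_n^\ast=\cB(D\cF^{-n}W_\infty)$ (the pullback of the infinite-curvature direction at $x$), which aligns projectively with $E^s_{x_{-n}}$ as $n\to\infty$; near that pole $\Phi_n'$ blows up rather than contracts. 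So the argument tacitly assumes that the parallel seed $\cB=0$ stays projectively separated from $E^s_{x_{-n}}$ along the entire backward orbit: if $\cB^s(x_{-n})$ comes close to $0$ for infinitely many $n$, the corresponding convergents are close to $\cB^s(x)$ rather than $\cB^u(x)$, and convergence fails. Closing this requires either a quantitative (subexponential) lower bound on the distance from $V^p_{x_{-n}}$ to $E^s_{x_{-n}}$, obtained from the Oseledets tempering of the angle between $E^u$ and $E^s$ and of the splitting coefficients, or the route the authors themselves take for their induced map in \S\ref{provehyper}: combine the order-preservation of $\cB$ under $D\cF^n$ (Proposition \ref{order}) with an invariant cone $0=\cB(V^p)<\cB(DF V^p)<\cB(DF V^d)<\cB(V^d)$ to obtain a monotone nested sequence of convergents, whose limit exists with no rate estimate at all and is then identified with $\cB^u(x)$. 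As written, your sketch names the obstacle but does not supply an argument that overcomes it.
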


Finally we recall an invariant property for consecutive reflections on
focusing boundary components by comparing the curvatures of the iterates of
different tangent vectors. Given two distinct points $a$ and $b$ on the unit
circle $\mathbb{S}^1$, denote by $(a,b)$ the interval from $a$ to $b$
counterclockwise. Given three distinct points $a,b,c$ on $\mathbb{S}^1$,
denote by $a\prec b \prec c$ if $b\in (a,c)$. Endow
$\mathbb{R}\cup\{\infty\}\simeq \mathbb{S}^1$ with the relative position
notation $\prec$ on $\mathbb{S}^1$.
\begin{pro}[\cite{Do91}]\label{order}
Let $X,Y,Z\in T_x\cM$ be three tangent vectors at $x\in \cM$ satisfying
$\cB(X)\prec\cB(Y)\prec\cB(Z)$. Then for each $n\in\mathbb{Z}$, the iterates
$D\cF^nX$, $D\cF^nY$ and $D\cF^nZ$ satisfy
$$\cB(D\cF^nX)\prec\cB(D\cF^nY)\prec\cB(D\cF^nZ).$$
\end{pro}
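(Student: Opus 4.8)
The plan is to exploit the fact that the one-step evolution of the curvature $\cB$ under $D\cF$ is a fractional linear (M\"obius) transformation with \emph{positive} determinant, and that such maps preserve the cyclic orientation of $\mathbb{R}\cup\{\infty\}\simeq\mathbb{S}^1$.

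First I would rewrite the evolution equation \eqref{ff1} as an action on curvatures alone. Writing $u=\cB(V)$, equation \eqref{ff1} gives
\[
\cB(D\cF V)=\frac{u+\cR(x)}{\tau(x)\,u+\bigl(1+\tau(x)\cR(x)\bigr)}=:g_x(u),
\]
the M\"obius transformation associated with the matrix
\[
M_x=\begin{pmatrix} 1 & \cR(x)\\ \tau(x) & 1+\tau(x)\cR(x)\end{pmatrix},\qquad \det M_x=1.
\]
The crucial point is that $\cB(D\cF V)$ depends on $V$ only through the scalar $\cB(V)$ and through the base point $x$; since $X,Y,Z$ share the same base point $x$, all three curvatures are transformed by the \emph{same} map $g_x$.

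Next I would invoke the elementary fact that a real M\"obius transformation of positive determinant is an orientation-preserving homeomorphism of $\mathbb{R}\cup\{\infty\}$. Indeed $g_x'(u)=\det M_x/\bigl(\tau(x)u+1+\tau(x)\cR(x)\bigr)^2>0$ wherever it is finite, so $g_x$ is increasing on each maximal interval on which it is finite, and these branches glue across the pole and across $\infty$ into an orientation-preserving circle map (equivalently, $g_x\in PSL(2,\mathbb{R})$ acting on $\mathbb{RP}^1\cong\mathbb{S}^1$). An orientation-preserving homeomorphism of $\mathbb{S}^1$ preserves the cyclic order $\prec$; hence $\cB(X)\prec\cB(Y)\prec\cB(Z)$ implies $\cB(D\cF X)\prec\cB(D\cF Y)\prec\cB(D\cF Z)$, which is the claim for $n=1$.

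Finally I would iterate. For $n\ge 1$ the map $D\cF^n$ acts on curvatures as the composition $g_{x_{n-1}}\circ\cdots\circ g_{x_0}$ with $x_k=\cF^k x$, a product whose matrix $M_{x_{n-1}}\cdots M_{x_0}$ again has determinant $1$; for $n\le -1$ one uses $g_x^{-1}$, whose matrix $M_x^{-1}$ also has positive determinant; and $n=0$ is trivial. Since each factor preserves $\prec$, so does the composition, for every $n\in\mathbb{Z}$. The only points needing care are the bookkeeping of the point at infinity, handled uniformly by working on $\mathbb{S}^1$ rather than on $\mathbb{R}$, and the tacit assumption that the orbit $\{\cF^k x\}$ avoids the singular set $\cS_1$ so that every $g_{x_k}$ is defined; I expect these to be the only real obstacles, the orientation-preservation of $g_x$ being the heart of the argument.
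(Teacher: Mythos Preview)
The paper does not supply its own proof of this proposition; it is quoted from Donnay \cite{Do91} and used as a black box. Your argument is correct and in fact is the standard one: writing the one-step curvature evolution \eqref{ff1} as the M\"obius map $u\mapsto (u+\cR(x))/(\tau(x)u+1+\tau(x)\cR(x))$ with matrix of determinant $1$, and then observing that elements of $PSL(2,\mathbb{R})$ preserve the cyclic order on $\mathbb{RP}^1\cong\mathbb{S}^1$. Your bookkeeping is right (the same $g_x$ applies to all three vectors because they share the base point $x$; compositions and inverses stay in $PSL(2,\mathbb{R})$; the point at infinity is absorbed by working projectively), and the caveat about avoiding $\cS_1$ is the correct hypothesis implicit in the statement. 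There is nothing to compare against in the paper itself; your write-up would serve as a perfectly good self-contained proof.
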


\section{Continued fractions for asymmetric lemon billiards}\label{cfrac}

In this section we construct two induced maps of the billiard system
$(\cM,\cF)$ on two different but closely related subsets of the phase space
$\cM$, and then study the evolutions of continued fractions of the curvatures
$\cB(V)$ under these induced maps. Let $Q(b,R)$ be an asymmetric lemon table
obtained as the intersection of a disk of radius 1 with a disk of radius
$R>1$, $\Gamma=\partial Q$, $\cM=\Gamma\times [-\pi/2,\pi/2]$ be the phase
space of the billiard map on $Q$. Note that $\cM$ consists of two parts:
$\cM_1:=\Gamma_1\times[-\pi/2,\pi/2]$ and
$\cM_R=\Gamma_R\times[-\pi/2,\pi/2]$, the sets of points in $\cM$ based on
the arc $\Gamma_{1}$ and $\Gamma_{R}$, respectively. Assume that $\Gamma_1$
is a major arc.

For $\sigma\in \{1,R\}$, $x\in \cM_{\sigma}\setminus \cS_1$, let $\eta(x)$ be
the number of successive reflections of $x$ on the arc $\Gamma_{\sigma}$.
That is,
 \beq
 \eta(x)=\sup\{n\geq 0\,:\, \cF^{k} x\in \cM_{\sigma} \text{ for all } k=0,\cdots,n\}.
 \eeq
For example, $\eta(x)=0$ if $\cF x\notin \cM_{\sigma}$, and $\eta(x)=\infty$
if $\cF^kx\in \cM_\sigma$ for all $k\ge 0$. Let $N=\{x\in
\cM_1\,:\,\eta(x)=\infty \}$. One can easily check that each point $x\in N$
is either periodic or belongs to the boundary $\{(s,\varphi)\in \cM_1\,:\,
\varphi=\pm \pi/2\}$. In particular, $N$ is a null set with $\mu(N)=0$.

Let $\hat \cM_{1}:=\{x\in \cM_{1}:\cF^{-1}x\notin \cM_{1}\}$ be the set of
points first entering $\cM_1$. Similarly we define $\hat \cM_{R}$. The
restriction of $\eta$ on $\hat \cM_{1}$ induces a measurable partition of
$\hat \cM_{1}$, whose cells are given by $M_n:=\eta^{-1}\{n\}\cap\hat \cM_1$
for all $n\geq 0$. Each cell $M_n$ contains all first reflection vectors on
the arc $\Gamma_1$ that will experience exactly $n$ reflections on $\Gamma_1$
before hitting $\Gamma_R$. Then it is easy to check that
\begin{equation}\label{decomp}
\cM_1=N\cup\bigcup_{n\ge 0}\bigcup_{0\le k\le n}\cF^k M_n.
\end{equation}

\subsection{The first induced map of $\cF$ on $\cM_1$}
Let $x_0\in \hat \cM_1$, and $j_0=\eta(x_0)$ be the numbers of successive
reflections of $x_0$ on $\Gamma_1$. Similarly, we denote $x_1=\cF^{j_0+1}
x_0$, and $j_1=\eta(x_1)$. Then the first return map $\hat F$ of $\cF$ on
$\hat\cM_1$ is given by
$$\hat Fx:=\cF^{j_0+j_1+2}x.$$
Note that the similar induced systems appeared in many references about
billiards with convex boundary components, see \cite{CM06,CZ05, M04}.  In the
systems considered in these references, the induced systems were shown to be
(uniformly) hyperbolic.  However, for our billiard systems on $Q(b,R)$, it is
rather difficult to prove the hyperbolicity for this type of the induced map.
Thus we introduce a new induced map in the next subsection. To make a
comparison, we next investigate the properties of  the induced map
$(\hat\cM_1, \hat F)$.

To simplify the notations, we denote by $\tau_0:=\tau(\cF^{j_0} x_0)$ the
length of the free path of $\cF^{j_0} x_0$, and by $\tau_1:=\tau(\cF^{j_1}
x_1)$ the length of the free path of $\cF^{j_1} x_1$. Moreover, let
$d_k=d(x_k)$, $\hat d_k=\frac{d_k}{j_k+1}$, $\hat\cR_k=-2/\hat d_k$, $\hat
\tau_k=\tau_k-j_k\hat d_k-j_{k+1}\hat d_{k+1}$, for $k=0,1$. Note that $\hat
d_k=d_k$ and $\hat\cR_k=\cR_k$ if $j_k=0$, and $\hat\tau_k=\tau_k$ if
$j_k=j_{k+1}=0$.

\vskip.1in

Using the relations in Proposition \ref{cont}, we can reduce the long
continuous fraction to a shorter one:
\begin{lem}\label{reduce1}
Let $x\in\hat \cM_1$, $V\in T_x\cM$ and $\hat V_1=D\hat F(V)$. Then $\cB(\hat
V_1)$ is given by the continued fraction:
\begin{align}\label{fract}
\cB(\hat V_1)&=[\tau_1-j_1\hat d_1,\hat\cR_1,
\hat\tau_0,\hat\cR_0,-j_0\hat d_0, \cB(V)].
\end{align}
\end{lem}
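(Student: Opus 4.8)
The plan is to start from the raw continued-fraction expansion \eqref{ffn} of $\cB(\hat V_1)=\cB(D\cF^{n}V)$ with $n=j_0+j_1+2$ (recall $\hat V_1=D\hat F(V)$), and then collapse it using Proposition \ref{cont} together with the combination rules \eqref{combine}--\eqref{add0}. First I would write out all $2n$ entries $\tau(\cF^k x),\cR(\cF^k x)$. The elementary observation driving everything is that reflections off a circle preserve the angle $\varphi$, so along each maximal run of reflections on one arc the data are \emph{constant}: on the $j_0+1$ consecutive reflections on $\Gamma_1$ one has $\cR=\cR_0=-2/d_0$ and every interior free path equals the chord $2d_0$, the only exceptional free path being the last one, $\tau_0$, that carries the orbit to $\Gamma_R$; symmetrically on $\Gamma_R$ with constant $\cR_1$, interior chords $2d_1$, and exit free path $\tau_1$. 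Thus \eqref{ffn} becomes a concatenation of two periodic blocks,
\[\cB(\hat V_1)=[\tau_1,\cR_1,\underbrace{2d_1,\cR_1,\ldots,2d_1,\cR_1}_{j_1\ \text{pairs}},\tau_0,\cR_0,\underbrace{2d_0,\cR_0,\ldots,2d_0,\cR_0}_{j_0\ \text{pairs}}+\cB(V)],\]
the last $\cR_0$ being merged with $\cB(V)$.

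The heart of the argument is a block-collapse identity: a run of $m+1$ equal-angle reflections on a circle of half-chord $d$, sitting inside a continued fraction flanked by entries $p$ and $q$, satisfies
\[[\ldots,p,\underbrace{\cR,2d,\cR,\ldots,2d,\cR}_{m+1\ \text{copies of }\cR},q,\ldots]=[\ldots,p-m\hat d,\hat\cR,q-m\hat d,\ldots],\]
where $\hat d=d/(m+1)$ and $\hat\cR=-2/\hat d$. I would prove this by induction on $m$. The case $m=0$ is vacuous, since then $\hat d=d$ and $\hat\cR=\cR$. For the inductive step I would apply the hypothesis to the inner sub-block, reduce the one remaining triple $(\cR,\,2d-m\hat d,\,\hat\cR)$ by Proposition \ref{cont}, and then renormalize via the auxiliary identity $[x,V,y]=[x+2/V,-V,y+2/V]$ (a one-line verification) so that the collapsed middle entry is again exactly of the form $-2/\hat d$. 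The decisive algebraic simplification throughout is that $d\cdot\cR=-2$ on any circular arc, which is precisely what forces the coefficients $A,B,C$ of Proposition \ref{cont} to telescope into the clean shifts $-m\hat d$ and the single effective reflection parameter $\hat\cR$.

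With the block-collapse identity available, the lemma follows by applying it twice. Collapsing the $\Gamma_R$ block, which is flanked by $\tau_1$ on the left and $\tau_0$ on the right, replaces it by the entry $\hat\cR_1$ and shifts its neighbors to $\tau_1-j_1\hat d_1$ and $\tau_0-j_1\hat d_1$. Collapsing the $\Gamma_1$ block next, I would first use \eqref{combine} to rewrite the merged innermost term $\cR_0+\cB(V)$ as a genuine final entry, so that $\cB(V)$ plays the role of the right flank $q$; the identity then produces $\hat\cR_0$, shifts the left neighbor (the already-modified $\tau_0$) by a further $-j_0\hat d_0$, and leaves a trailing $-j_0\hat d_0$ followed by $\cB(V)$, where one re-folds with \eqref{combine}/\eqref{add0}. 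Composing the two shifts on the middle entry gives $\tau_0-j_1\hat d_1-j_0\hat d_0=\hat\tau_0$, and the surviving outermost entry is $\tau_1-j_1\hat d_1$, so the result is exactly \eqref{fract}.

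I expect the main obstacle to be the bookkeeping inside the block-collapse induction: tracking how each application of Proposition \ref{cont} perturbs \emph{both} flanking entries and verifying that, after renormalization, these perturbations accumulate to precisely $-m\hat d$ on each side rather than to some $m$-dependent remainder. Once the correct inductive statement is isolated (with the normalization $\hat\cR=-2/\hat d$ built in, reflecting the neutral nature of circular reflection) and the auxiliary three-term identity is in hand, the remaining manipulations are routine applications of \eqref{combine}--\eqref{add0}.
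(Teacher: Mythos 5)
Your proposal is correct and follows essentially the same route as the paper: expand \eqref{ffn} into two constant blocks, collapse each block into the form $[\,\cdot-m\hat d,\hat\cR,\cdot-m\hat d\,]$, and handle the merged innermost term $\cR_0+\cB(V)$ via \eqref{combine}--\eqref{add0}. The only difference is organizational --- the paper obtains your block-collapse identity by quoting the reduction $[\tau,\cR,2d,\ldots,2d,\cR+\cB(V)]=[\tau,\cR/2,-2md,\cR/2+\cB(V)]$ from Chernov--Markarian and then applying Proposition \ref{cont} once per block to the triple $(\cR/2,-2md,\cR/2)$, whereas you derive the same identity by induction on the block length (and your displayed middle entry in the inductive step should read $(m+1)d/m$ rather than $2d-m\hat d$, a bookkeeping slip you already anticipated).
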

\begin{remark}
Note that in the case $j_0=0$ and $j_1=0$, $\hat F x =\cF^2 x$, and the
relation \eqref{fract} reduces to the formula \eqref{ffn} with $n=2$:
$\cB(\hat V_1)=[\tau_1,\cR_1, \tau_0,\cR_0+\cB(V)]$.
\end{remark}
\begin{proof}
Suppose a point $x\in \cM$ have $m$ consecutive reflections on a circular arc
$\Gamma_\sigma$, where $\sigma\in\{1,R\}$. That is, $\cF^{i}x\in
\cM_{\sigma}$, for $i=0, \cdots, m$. In this case we always have
$$d(\cF^{i}x)= d(x),\, \cR(\cF^{i}x)=-2/d(x),\,  0\le i\le m,
\text{ and } \tau(\cF^{i}x)=2d(x), \, 0\le i< m.$$
Then for any $V\in
T_{x}\cM_{\sigma}$,
\begin{align}
\cB(D\cF^{m+1} V)&=[\tau(\cF^{m}x),\underbrace{\cR(x),
2 d(x), \cR(x),\cdots,2 d(x)}_\text{$m$ times },\cR(x)+\cB(V)]\nonumber\\
&=[\tau(\cF^{m}x),\cR(x)/2,-2m\cdot d(x),\cR(x)/2+\cB(V)],\label{redu0}
\end{align}
see \cite[\S 8.7]{CM06}. Applying this reduction process for each of the two
reflection series on $\Gamma_1$ and $\Gamma_R$ respectively, we see that
\begin{align}
\cB(\hat V_1)=[&\tau_1,\cR_1/2,-2j_1d_1,\cR_1/2,
\tau_0,\cR_0/2,-2j_0d_0,\cR_0/2+\cB(V)].\label{reduction1}
\end{align}
Now we rewrite the last segment in \eqref{reduction1} as
$[\cdots,\cR_0/2+\cB(V)]=[\cdots,\cR_0/2,0,\cB(V)]$ by Eq. \eqref{add0}. Then
applying Eq. \eqref{ABC} to the segment $(a,b,c)=(\cR_0/2,-2j_0d_0,\cR_0/2)$
in Eq. \eqref{reduction1}, we get
\begin{align*}
B_0&:=a+c+abc=\cR_0-(\cR_0/2)^2\cdot 2j_0d_0
=-\frac{2+2j_0}{d_0}=-\frac{2}{\hat d_0}=\hat\cR_0,\\
A_0&:=\frac{bc}{B_0}=-2j_0d_0\cdot \cR_0/2\cdot(-\frac{\hat d_0}{2})=-j_0\cdot\hat d_0,\\
C_0&:=\frac{ab}{B_0}=-2j_0d_0\cdot \cR_0/2\cdot(-\frac{\hat d_0}{2})=-j_0\cdot\hat d_0.
\end{align*}
Putting them together with \eqref{reduction1}, we have
\begin{align}
\cB(\hat V_1)&=
[\tau_1,\cR_1/2,-2j_1d_1,\cR_1/2,
\tau_0,\cR_0/2,-2j_0d_0,\cR_0/2,0,\cB(V)]\nonumber\\
&=[\tau_1,\cR_1/2,-2j_1d_1,\cR_1/2,\tau_0+A_0,B_0,0+C_0,\cB(V)]\nonumber\\
&=[\tau_1,\cR_1/2,-2j_1d_1,\cR_1/2,\tau_0-j_0\cdot\hat d_0,
\hat\cR_0,-j_0\cdot\hat d_0,\cB(V)].\label{inter1}
\end{align}

Similarly we can apply Eq. \eqref{ABC} to the segment
$(\cR_1/2,-2j_1d_1,\cR_1/2)$, and get $B_1=\hat \cR_1$,
$A_1=C_1=-j_1\cdot\hat d_1$. Then we can continue the computation from
\eqref{inter1} and get
\begin{align*}
\cB(\hat V_1)&=[\tau_1,\cR_1/2,-2j_1d_1,\cR_1/2,
\tau_0-j_0\cdot\hat d_0,\hat\cR_0,-j_0\cdot\hat d_0,\cB(V)]\\
&=[\tau_1+A_1,B_1,\tau_0+C_1-j_0\cdot\hat d_0,\hat\cR_0,-j_0\cdot\hat d_0,\cB(V)]\\
&=[\tau_1-j_1\hat d_1,\hat\cR_1,
\hat\tau_0,\hat\cR_0,-j_0\hat d_0, \cB(V)],
\end{align*}
where $\hat d_k=\frac{d_k}{j_k+1}$, $\hat\cR_k=-2/\hat d_k$ for $k=0,1$, and
$\hat \tau_0=\tau_0-j_0\hat d_0-j_{1}\hat d_{1}$. This completes the proof.
\end{proof}

It is clear that the formula in Eq. \eqref{fract} is recursive. For example,
let $\hat V_2=D\hat F^2(V)$, $\tau_2=\tau_0(\hat Fx)$ and $\tau_3=\tau_1(\hat
Fx)$ be the lengths of the free paths for $\hat Fx$. Then we have
\[\cB(\hat
V_2) =[\tau_3-j_3\hat d_3,\hat\cR_3, \hat\tau_2,\hat\cR_2, \hat\tau_1,\hat\cR_1,
\hat\tau_0,\hat\cR_0,-j_0\hat d_0, \cB(V)].\] %
More generally, using the backward iterates $x_{-n}=\hat F^{-n}x$ of $x$, and
the related notations (for example, $d_{-2n}=d_0(x_{-n})$, and
$d_{1-2n}=d_1(x_{-n})$), we get a formal continued fraction
\begin{equation}\label{tradition}
[\tau_0-j_0\hat d_0, \hat\cR_0,\hat\tau_{-1}, \hat\cR_{-1},
\hat\tau_{-2},\cdots, \hat\cR_{1-2n},\hat\tau_{-2n},\hat\cR_{-2n},\hat\tau_{-2n-1},\cdots]
\end{equation}
where $\hat d_k=\frac{d_k}{j_k+1}$, $\hat\cR_k=-2/\hat d_i$, $\hat
\tau_k=\tau_k-j_k\hat d_k-j_{k+1}\hat d_{k+1}$, for each $k\le -1$.

\begin{remark}
In the dispersing billiard case, each entry of the continued fraction
\eqref{ffn} is positive. Then Seidel--Stern Theorem (see \cite{Kh64}) implies
that the limit of \eqref{ffn} (as $n\to\infty$) always exists, since the
total time $\tau_0+\cdots+\tau_n\to\infty$. For the reduced continued
fraction \eqref{tradition}, it is clear that $\hat \cR_{-n}<0$ for all $n\ge
0$. Moreover, $\hat\cR_{-2n}\le\cR(x_{-n})\le -2$ for each $n\ge 1$, since
the radius of the small disk is set to $r=1$. Therefore, $\sum_n
\hat\cR_{-n}(x)$ always diverges. However, Seidel--Stern Theorem is not
applicable to determine the convergence of \eqref{tradition}, since the terms
$\hat \tau_k$ have no definite sign. This is the reason that we need to
introduce a new return map $(M,F)$ instead of using $(\hat M_1,\hat F)$ to
investigate the hyperbolicity.
\end{remark}

\subsection{New induced map and its analysis}\label{induced}
Denote by $\lceil t\rceil$ the smallest integer larger than or equal to the
real number $t$. We consider a new subset, which consists of ``middle"
sliding reflections on $\Gamma_1$. More precisely, let
 \beq
M:=\bigcup_{n\ge0}\cF^{\lceil n/2\rceil}M_n= M_0\cup \cF M_1\cup \cF
M_2\cup\cdots\cup \cF^{\lceil n/2\rceil}M_n\cup\cdots.
 \eeq
Let $F$ be the first return map of $\cF$ with respect to $M$. Clearly the
induced map $F:M\to M$ is measurable and preserves the conditional measure
$\mu_M$ of $\mu$ on $M$, which is given by $\ds \mu_M(A)=\mu(A)/\mu(M)$, for
any Borel measurable set $A\subset M$.

For each $x\in M$,  we introduce the following notations:
\begin{enumerate}
\setlength{\itemsep}{4pt}
\item let $i_0=\eta(x)\ge0$ be the number of forward reflections of $x$
    on $\Gamma_1$, $\tau_0:=\tau(\cF^{i_0}x)$
 be the distance between the last reflection on $\Gamma_1$ and the first
 reflection on $\Gamma_R$. Let $d_0:=d(x)$ and $\cR_0:=\cR(x)$, which
 stay the same along this series of reflections on $\Gamma_1$;

\item let $i_1=\eta(x_1)\ge0$ be the number of reflections of
    $x_1=\cF^{i_0+1}x$ on $\Gamma_R$, $\tau_1:=\tau(\cF^{i_1}x_1)$ be the
    distance between the last reflection on $\Gamma_R$ and the next
    reflection on $\Gamma_1$. Let $d_1=d(x_1)$, and $\cR_1=\cR(x_1)$,
    which stay the same along this series of reflections on $\Gamma_R$;

\item let $i_2=\lceil\eta(x_2)/2\rceil\ge0$ be the number\footnote{ Our
    choice of $i_2=\lceil\eta(x_2)/2\rceil$ in Item (3), instead of using
    $\eta(x_2)$, is due to the fact that $M$ is the union of the sets
    $\cF^{\lceil n/2\rceil}M_n$, $n\ge 0$.} of reflections of
    $x_2=\cF^{i_1+1}x_1$ on $\Gamma_1$ till the return to $M$. Let
    $d_2=d(x_2)$, $\cR_2=\cR(x_2)$, which stay the same along this series
    of reflections on $\Gamma_1$.
\end{enumerate}
Then the first return map $F$ on $M$ is given explicitly by
$Fx=\cF^{i_0+i_1+i_2+2}x$. Note that $x_2=Fx$ and $d_2=d(Fx)$ (in above
notations).

The following result is the analog of Lemma \ref{reduce1} on the reduction of
continued fractions for the new induced return map $F$:
\begin{lem}\label{reduce2}
Let $x\in M$, $V\in T_x\cM$ and $V_1=DF(V)$. Then $\cB(V_1)$ is given by the
continued fraction:
\begin{align}\label{fract2}
\cB(V_1)&=[d_2,\frac{2i_2}{d_2},\bar\tau_1,\hat\cR_1,
\hat\tau_0,\hat\cR_0,-i_0\hat d_0,\cB(V)],
\end{align}
where $\hat d_k=\frac{d_k}{j_k+1}$, $\hat\cR_k=-2/\hat d_k$,
$\hat\tau_0=\tau_0-i_0\hat d_0-i_{1}\hat d_{1}$, and
$\bar\tau_1=\tau_1-i_1\hat d_1-d_{2}$.
\end{lem}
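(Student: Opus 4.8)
The plan is to follow the same three-step strategy as in the proof of Lemma \ref{reduce1}, the only genuinely new feature being that the return map $F$ stops in the \emph{middle} of a sliding sequence on $\Gamma_1$, so that the endpoint $Fx=\cF^{i_0+i_1+i_2+2}x$ is itself a collision on $\Gamma_1$ rather than the first collision on the opposite arc. This is what produces the leading half-chord $d_2$ and the entry $\frac{2i_2}{d_2}$ in \eqref{fract2}, as opposed to the form appearing in Lemma \ref{reduce1}.

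First I would write down the full continued fraction for $\cB(V_1)=\cB(D\cF^{i_0+i_1+i_2+2}V)$ directly from \eqref{ffn}. Along the orbit $x=y_0,y_1,\dots,y_N$ (with $N=i_0+i_1+i_2+2$ and $y_N=Fx$) there are three blocks of consecutive reflections: $i_0+1$ reflections on $\Gamma_1$ with $d=d_0$, $\cR=\cR_0$ and internal chords $2d_0$; then $i_1+1$ reflections on $\Gamma_R$ with $d=d_1$, $\cR=\cR_1$, chords $2d_1$; then $i_2+1$ reflections on $\Gamma_1$ with $d=d_2$, $\cR=\cR_2$, chords $2d_2$. Since $y_N$ is the evaluation point it contributes no $\cR$-entry, so the third block contributes exactly $i_2$ pairs $(2d_2,\cR_2)$ and begins with the \emph{internal} chord $2d_2=\tau(y_{N-1})$ rather than with an exit free path. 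Collecting these gives
\begin{align*}
\cB(V_1)=[\,\underbrace{2d_2,\cR_2,\dots,2d_2,\cR_2}_{i_2\text{ pairs}},\ \tau_1,\ \underbrace{\cR_1,2d_1,\dots,\cR_1,2d_1}_{i_1\text{ pairs}},\ \cR_1,\ \tau_0,\ \underbrace{\cR_0,2d_0,\dots,\cR_0,2d_0}_{i_0\text{ pairs}},\ \cR_0+\cB(V)\,].
\end{align*}
Second, I would reduce the first two blocks exactly as in Lemma \ref{reduce1}: applying \eqref{redu0} followed by Proposition \ref{cont} to the $y_0$-series on $\Gamma_1$ and to the $y_1$-series on $\Gamma_R$ collapses them, leaving the tail $[\,\tau_1-i_1\hat d_1,\ \hat\cR_1,\ \hat\tau_0,\ \hat\cR_0,\ -i_0\hat d_0,\ \cB(V)\,]$ with $\hat\tau_0=\tau_0-i_0\hat d_0-i_1\hat d_1$. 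This step is routine and identical to the earlier computation.

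The crux is the third block, and I expect it to be the main obstacle. Here I would establish the ``half-series'' reduction
\begin{align*}
[\,\underbrace{2d_2,\cR_2,\dots,2d_2,\cR_2}_{i_2\text{ pairs}},\ u,\ \dots\,]=[\,d_2,\ \tfrac{2i_2}{d_2},\ u-d_2,\ \dots\,],
\end{align*}
valid for any tail whose first entry is $u$, using only $\cR_2=-2/d_2$. Substituting $u=\tau_1-i_1\hat d_1$ converts the leading tail entry into $\bar\tau_1=\tau_1-i_1\hat d_1-d_2$ and prepends the pair $d_2,\tfrac{2i_2}{d_2}$, which together with the block from the second step yields \eqref{fract2}. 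To prove the identity I would use the transfer-matrix description of continued fractions, under which the pair $(2d_2,\cR_2)$ corresponds to the matrix $P=\left(\begin{smallmatrix}1&\cR_2\\2d_2&1+2d_2\cR_2\end{smallmatrix}\right)=\left(\begin{smallmatrix}1&-2/d_2\\2d_2&-3\end{smallmatrix}\right)$. Since $P$ has trace $-2$ and determinant $1$, one has $P=-(I-N)$ with $N^2=0$, so $P^{i_2}=(-1)^{i_2}\left(\begin{smallmatrix}1-2i_2&2i_2/d_2\\-2i_2d_2&1+2i_2\end{smallmatrix}\right)$ is affine in $i_2$; comparing this with the matrix of $[d_2,\tfrac{2i_2}{d_2},\,\cdot\,]$ composed with the shift $u\mapsto u-d_2$ gives the identity at once (up to the harmless overall sign, which does not affect the projective action). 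Alternatively the same identity can be proved by induction on $i_2$ using \eqref{ABC}. Finally I would check the degenerate case $i_2=0$ (no interior reflection, $Fx=x_2$): then $[\,d_2,0,\bar\tau_1,\dots\,]=[\,d_2+\bar\tau_1,\dots\,]=[\,\tau_1-i_1\hat d_1,\dots\,]$ by \eqref{add0}, recovering the full-series form of Lemma \ref{reduce1}, so the formula is consistent.
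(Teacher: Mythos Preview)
Your proposal is correct. The set-up (writing out the full continued fraction with three blocks, and collapsing the $\Gamma_1$- and $\Gamma_R$-series via \eqref{redu0} and Proposition~\ref{cont} to obtain the tail $[\tau_1-i_1\hat d_1,\hat\cR_1,\hat\tau_0,\hat\cR_0,-i_0\hat d_0,\cB(V)]$) is exactly what the paper does. The only genuine divergence is in the treatment of the leading block of $i_2$ pairs $(2d_2,\cR_2)$, where you prove the ``half-series'' identity $[\underbrace{2d_2,\cR_2,\dots,2d_2,\cR_2}_{i_2},u,\dots]=[d_2,\tfrac{2i_2}{d_2},u-d_2,\dots]$ directly by computing $P^{i_2}$ through the nilpotent decomposition $P=-(I-N)$. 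The paper instead avoids introducing matrices: it observes that adding a single $\cR_2$ on the left and taking the reciprocal, i.e.\ passing to $\frac{1}{\cB(V_1)+\cR_2}=[\cR_2,2d_2,\dots,\cR_2,2d_2,\cR_2+\cB(\hat V_1)]$, converts the half-open pattern into the \emph{full} pattern to which the already-proved reduction \eqref{redu0} applies; unwinding gives $\cB(V_1)=[0,-\cR_2/2,-2i_2d_2,\cR_2/2,0,\cB(\hat V_1)]$, and a single application of Proposition~\ref{cont} to the triple $(-\cR_2/2,-2i_2d_2,\cR_2/2)$ then produces $[d_2,\tfrac{2i_2}{d_2},-d_2,\cB(\hat V_1)]$, whence \eqref{fract2} after merging $-d_2$ with the first entry of the tail. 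Your route is a bit more self-contained and makes the parabolic nature of the sliding block visible ($\operatorname{tr}P=-2$, $\det P=1$), while the paper's route keeps everything within the continued-fraction identities already on the page; either way the degenerate case $i_2=0$ is handled by \eqref{add0}, exactly as you note.
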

Note that \eqref{fract2} may not be as pretty as \eqref{fract}. It involves
three types of quantities: the original type ($i_0$, $i_2$ and $d_2$), the
first variation ($\hat R_k$, $\hat \tau_0$ and $\hat d_0$), and the second
variation $\bar \tau_1$. The quantity $\hat d_2$ appears only in the
intermediate steps of the proof, and does not appear in the final formula
\eqref{fract2}.
\begin{remark}\label{remark-i2}
Note that in the case $i_2=0$, \eqref{fract2} reduces to \eqref{fract}:
\begin{align*}
\cB(V_1)&=[d_2,0,\bar\tau_1,\hat\cR_1, \hat\tau_0,\hat\cR_0,-i_0\hat d_0,\cB(V)]
=[d_2+\bar\tau_1,\hat\cR_1, \hat\tau_0,\hat\cR_0,-i_0\hat d_0,\cB(V)]\\
&=[\tau_1-i_1\hat d_1,\hat\cR_1, \hat\tau_0,\hat\cR_0,-i_0\hat d_0,\cB(V)].
\end{align*}
\end{remark}
\begin{proof}
We first consider an intermediate step. That is, let $\hat
V_1=D\cF^{i_0+i_1+2}(V)$ and $V_1=D\cF^{i_2}(\hat V_1)$. Applying the
reduction process \eqref{redu0} for each of the series of reflection of
lengths $(i_0,i_1)$ (as in the proof of Lemma \ref{reduce1}), we get that
\begin{align*}
\cB(\hat V_1)=[\tau_1-i_1\hat d_1,\hat\cR_1, \hat\tau_0,\hat\cR_0,-i_0\hat d_0,\cB(V)].
\end{align*}
This completes the proof of \eqref{fract2} when $i_2=0$ (see Remark
\ref{remark-i2}). In the following we assume $i_2\ge 1$. In this case we have
$\cB(V_1)=[\underbrace{2 d_2, \cR_2,\cdots,2d_2,\cR_2}_\text{$i_2$
times}+\cB(\hat V_1)]$. To apply the reduction \eqref{redu0}, we need to
consider
\begin{equation}\label{supply}
\frac{1}{\cB(V_1)+\cR_2}=[\underbrace{\cR_2,2 d_2,
\cdots,\cR_2,2d_2}_\text{$i_2$ times},\cR_2+\cB(\hat V_1)].
\end{equation}
Then we apply the reduction \eqref{redu0} to Eq. \eqref{supply} and get $\ds
\frac{1}{\cB(V_1)+\cR_2}=[\cR_2/2,-i_2\hat d_2,\cR_2/2+\cB(\hat V_1)]$, which
is equivalent to
\begin{equation}\label{inverse}
\cB(V_1)=-\cR_2/2+[-i_2\hat d_2,\cR_2/2+\cB(\hat V_1)]
=[0,-\cR_2/2,-i_2\hat d_2,\cR_2/2,0,\cB(\hat V_1)].
\end{equation}
Applying the relation
\eqref{ABC} to the segment $(a,b,c)=(-\cR_2/2,-2i_2d_2,\cR_2/2)$, we get
\begin{align*}
B&:=a+c+abc=0+(\cR_2/2)^2\cdot 2i_2d_2=\frac{2i_2}{d_2},\\
A&:=\frac{bc}{B}=-2i_2d_2\cdot \cR_2/2\cdot\frac{d_2}{2i_2}=d_2,\\
C&:=\frac{ab}{B}=2i_2d_2\cdot \cR_2/2\cdot\frac{d_2}{2i_2}=-d_2.
\end{align*}
Putting them together with Eq. \eqref{inverse}, we have
\begin{align*}
\cB(V_1)=[d_2,\frac{2i_2}{d_2},-d_2,\cB(\hat V_1)]
&=[d_2,\frac{2i_2}{d_2},\bar\tau_1,\hat\cR_1,
\hat\tau_0,\hat\cR_0,-i_0\hat d_0,\cB(V)],
\end{align*}
where $\bar\tau_1=\tau_1-i_1\hat d_1-d_{2}$ follows from \eqref{combine2}.
This completes the proof of  \eqref{fract2}.
\end{proof}

\section{Hyperbolicity of asymmetric lemon billiards}\label{Sec:4}

In this section we first list several  general sufficient conditions that
ensure the hyperbolicity of the asymmetric lemon-type billiards (see the
statement below and the proof of Theorem \ref{hyper}), then we verify these
conditions for a set of asymmetric lemon tables. Let $Q(b,R)$ be an
asymmetric lemon table satisfying (A0), that is, $\max\{R-1,1\}<b<R$. Let $M$
be the subset introduced in \S\ref{induced}. We divide the set $M$ into three
disjoint regions $X_k$, $k=0,1,2$, which are given by
\begin{enumerate}
\item[(a)] $X_0=\{x\in M\,:\, i_1(x)\ge 1\}$;

\item[(b)] $X_1=\{x\in M\,:\, i_1(x)=0, \text{ and } i_2(x)\ge1\}$;

\item[(c)] $X_2=\{x\in M\,:\, i_1(x)=i_2(x)=0\}$.
\end{enumerate}

We first make a simple observation:
\begin{lem}
For each $x$ with $i_1(x)=0$, one has  $\tau_0+\tau_1>d_0+d_2$.
\end{lem}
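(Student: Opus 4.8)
The plan is to turn the inequality into elementary plane geometry and then exploit the reflection law at the single $\Gamma_R$-collision. Since $i_1(x)=0$, the relevant orbit piece visits exactly three boundary points: let $P$ be the base point of $\cF^{i_0}x$ (the last reflection of the $\Gamma_1$-series), $Y$ the base point of $x_1=\cF^{i_0+1}x$ (the unique reflection on $\Gamma_R$), and $P'$ the base point of $x_2=Fx$ (the first reflection back on $\Gamma_1$). Then $\tau_0=|PY|$ and $\tau_1=|YP'|$, while $d_0=\cos\varphi_P$ and $d_2=\cos\varphi_{P'}$ are the half-chords of the trajectory at $P$ and $P'$ inside $D_1$. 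Write $O_1,O_R$ for the centers of $D_1,D_R$, so that $|O_1P|=|O_1P'|=1$, $|O_RY|=R$, and $|O_1O_R|=b$; let $v_{\mathrm{in}}=\widehat{PY}$ and $v_{\mathrm{out}}=\widehat{YP'}$ denote the incoming and outgoing unit velocities at $Y$.

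The crux is a pair of projection identities. Using $d_0=\cos\varphi_P$ and $|O_1P|=1$, one computes $\tau_0 d_0=\langle \overrightarrow{PY},\,\widehat{PO_1}\rangle=1-\langle \overrightarrow{O_1Y},\overrightarrow{O_1P}\rangle$; combining this with $\tau_0^2=|\overrightarrow{O_1Y}-\overrightarrow{O_1P}|^2$ yields
\[
\tau_0-d_0=\frac{\langle \overrightarrow{O_1Y},\overrightarrow{PY}\rangle}{\tau_0}=\langle \overrightarrow{O_1Y},\,v_{\mathrm{in}}\rangle .
\]
Running the incoming segment backwards from $P'$ gives the analogous identity $\tau_1-d_2=-\langle \overrightarrow{O_1Y},\,v_{\mathrm{out}}\rangle$. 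Adding the two, the half-chord terms telescope into the reflection vector:
\[
(\tau_0+\tau_1)-(d_0+d_2)=\langle \overrightarrow{O_1Y},\,v_{\mathrm{in}}-v_{\mathrm{out}}\rangle .
\]

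Now I apply the reflection law at $Y\in\Gamma_R\subset\partial D_R$. With $\mathbf n=\widehat{O_RY}$ the outward unit normal of $Q(b,R)$ at $Y$, reflection gives $v_{\mathrm{out}}=v_{\mathrm{in}}-2\langle v_{\mathrm{in}},\mathbf n\rangle\mathbf n$, so $v_{\mathrm{in}}-v_{\mathrm{out}}=2\langle v_{\mathrm{in}},\mathbf n\rangle\mathbf n$ and
\[
(\tau_0+\tau_1)-(d_0+d_2)=2\,\langle v_{\mathrm{in}},\mathbf n\rangle\,\langle \overrightarrow{O_1Y},\mathbf n\rangle .
\]
It remains to check both factors are positive. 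The first equals $\cos\varphi_Y>0$, since the incoming velocity points outward through the wall $\Gamma_R$. For the second, write $\overrightarrow{O_1Y}=\overrightarrow{O_1O_R}+R\mathbf n$; as $\mathbf n$ is a unit vector and $|O_1O_R|=b$, the Cauchy--Schwarz inequality gives $\langle \overrightarrow{O_1Y},\mathbf n\rangle=R+\langle \overrightarrow{O_1O_R},\mathbf n\rangle\ge R-b>0$, the final strict inequality being exactly the hypothesis $b<R$ contained in \textbf{(A0)}. Hence $\tau_0+\tau_1>d_0+d_2$.

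The only genuine obstacle is spotting the identity in the second paragraph: once one recognizes that each $\tau_k-d_k$ is merely the projection of $\overrightarrow{O_1Y}$ onto the corresponding velocity, the difference collapses onto $v_{\mathrm{in}}-v_{\mathrm{out}}$, and the reflection law together with the crude bound $b<R$ closes the argument immediately. Everything else is routine plane trigonometry, and the proof uses neither the \emph{major arc} hypothesis nor the value of $i_0$ or $i_2$ --- only \textbf{(A0)} and the normalization $r=1$, which enters through $|O_1P|=|O_1P'|=1$.
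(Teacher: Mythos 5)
Your proof is correct, and it takes a genuinely different route from the paper's. The paper argues by unfolding: it reflects the table across the tangent line $L$ to $\Gamma_R$ at the single reflection point $p_1$, so that the broken path of length $\tau_0+\tau_1$ becomes a straight segment ending at the mirror image $p_2^\ast$ of $p_2$; the hypothesis $b>R-1$ from \textbf{(A0)} is then invoked to place $p_2^\ast$ outside $D_1$, which forces the segment to exceed the full chord $2d_0$, and by the symmetric argument also $2d_2$, after which the two inequalities are averaged. Your argument instead proves only the averaged inequality, but does so via an exact identity: the projection formulas $\tau_0-d_0=\langle\overrightarrow{O_1Y},v_{\mathrm{in}}\rangle$ and $\tau_1-d_2=-\langle\overrightarrow{O_1Y},v_{\mathrm{out}}\rangle$ (which I checked; they are the power-of-a-point computation in disguise) collapse under the reflection law to
\[
(\tau_0+\tau_1)-(d_0+d_2)=2\cos\varphi_Y\,\bigl\langle\overrightarrow{O_1Y},\widehat{O_RY}\bigr\rangle\ \ge\ 2\cos\varphi_Y\,(R-b),
\]
and this formula is verified, with equality in the lower bound, on the period-two orbit $\mathcal{O}_2$. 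What each approach buys: the paper's unfolding yields the individually stronger statements $\tau_0+\tau_1>2d_0$ and $\tau_0+\tau_1>2d_2$ and rests on the other half of \textbf{(A0)}, namely $b>R-1$; yours yields an exact quantitative expression for the defect and uses only $b<R$, which is arguably the more natural hypothesis here since $b<R$ is exactly what makes $\mathcal{O}_2$ hyperbolic rather than elliptic. Both proofs implicitly assume the collision at $Y$ is non-tangential ($\cos\varphi_Y>0$), which is harmless since tangential collisions lie in the singular set.
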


\begin{proof}
Suppose $i_1(x)=0$, and $p_1$ be the reflection point of $x_1$ on $\Gamma_R$.
Then we take the union of $Q(b,R)$ with its mirror, say $Q^\ast(b,R)$, along
the tangent line $L$ of $\Gamma_R$ at $p_1$, and extend the pre-collision
path of $x_1$ beyond the point $p_1$, which will intersect $\partial
Q^\ast(b,R)$ at the mirror point of the reflection point $p_2$ of $\cF x_1$,
say $p^\ast_2$ (see Fig .\ref{mirror}). Clearly the distance
$|p_2^\ast-p_1|=|p_2-p_1|=\tau_1$. By the assumption that $b>R-1$, one can
see that the tangent line $L$ cuts out a major arc on $\partial D_1$ (clearly
larger than $\Gamma_1$), and the point $p_2^\ast$ lies outside of the unit
disk $D_1$. Therefore, $\tau_0+\tau_1>2d_0$. Similarly, we have
$\tau_0+\tau_1>2d_2$. Putting them together, we get $\tau_0+\tau_1>d_0+d_2$.
\end{proof}

\begin{figure}[htb]
\centering
\begin{overpic}[width=60mm]{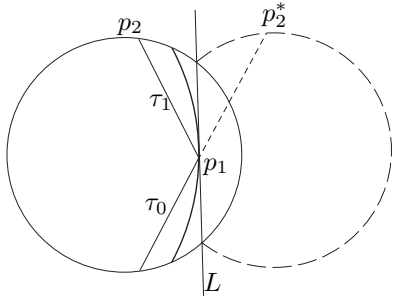}
\put(51,32){$p_1$}
\put(32,63){$p_2$}
\put(64,65){$p_2^\ast$}
\put(51,5){$L$}
\put(39,46){$\tau_1$}
\put(38,23){$\tau_0$}
\end{overpic}
\caption{The case with $i_1(x)=0$: there is  only one reflection on $\Gamma_R$.}
\label{mirror}
\end{figure}

\subsection{Main Assumptions and their analysis}\label{assumption}
In the following we list the assumptions on $X_i$'s.

\noindent(\textbf{A1}) For $x\in X_0$: $i_0\ge1$, $i_2\ge1$ and
\begin{equation}\label{assume1}
\tau_0<(1-\frac{1}{2(1+i_0)}) d_0+\frac{i_1}{1+i_1}d_1,\quad
\tau_1<(1-\frac{1}{2i_2})d_2+\frac{i_1}{1+i_1}d_1;
\end{equation}

\noindent(\textbf{A2}) For $x\in X_1$: $\frac{d_0}{1+i_0}< d_1$ and
 $\tau_0+\tau_1<(1-\frac{1}{2(1+i_0)})d_0+ d_1+(1-\frac{1}{2i_2})d_2$;

\noindent(\textbf{A3}) For $x\in X_2$: $\tau_0\le d_1/2$.

\vskip.3cm

To prove Theorem \ref{hyper}, it suffices to verify hyperbolicity of the
first return map $F:M\to M$, obtained by restricting $\cF$ on $M$. For each
$x\in M$, $V\in T_x\cM$, we let $\cB(V)=\cB^-(V)$ be the pre-reflection
curvature of $V$. Note that $\cB(V)$ determines $V$ uniquely up to a scalar.
Let $V^d_x\in T_x\cM$ be the unit vector corresponding to the incoming beam
with curvature $\cB(V^d_x)=1/d(x)$, and $V^{p}_x\in T_x\cM$ be the unit
vector corresponding to the parallel incoming beam $\cB(V^p_x)=0$,
respectively.
\begin{pro}\label{suffness}
Let $x\in M$, $i_k$, $d_k$ and $\hat d_k$, $k=0,1,2$ be the corresponding
quantities of $x$ given in \S \ref{induced}, and $Fx=\cF^{i_0+i_1+i_2+2}x$ be
the first return map of $\cF$ on $M$. Then
we have

\noindent $\mathrm{(I)}$. $0<\cB(DF(V_x^d))<1/d_2$ if one of the
following conditions holds:
 \begin{enumerate}
\item[(\textbf{D1})] $\tau_1-i_1\hat d_1-d_{2}+[-\frac{2}{\hat
    d_1},\tau_0-d_0-i_{1}\hat d_{1}]>0$,

\item[(\textbf{F1})]  $\tau_1-i_1\hat d_1-d_{2}+[-\frac{2}{\hat
    d_1},\tau_0-d_0-i_{1}\hat d_{1}]<-\frac{d_2}{2i_2}$.
 \end{enumerate}

\vskip.2cm

\noindent $\mathrm{(II)}$. $0<\cB(DF(V_x^p))<1/d_2$ if one of the following
conditions holds:
 \begin{enumerate}
\item[(\textbf{D2})] $\tau_1-i_1\hat d_1-d_{2}+[-\frac{2}{\hat d_1},
    \tau_0-(1-\frac{1}{2(1+i_0)})d_0-i_{1}\hat d_{1}]>0$,

\item[(\textbf{F2})]  $\tau_1-i_1\hat d_1-d_{2}+[-\frac{2}{\hat d_1},
    \tau_0-(1-\frac{1}{2(1+i_0)})d_0-i_{1}\hat d_{1}]<-\frac{d_2}{2i_2}$.
 \end{enumerate}

\vskip.2cm

\noindent $\mathrm{(III)}$.  $0<\cB(DF(V_x^p))<\cB(DF(V_x^d))<1/d_2$ if one
of the following conditions holds:
 \begin{enumerate}
\item[(\textbf{P1})] One of the paired conditions\footnote{In the
    following we will use the term \textbf{(D1)-(D2)}, which is short for
    ``both conditions \textbf{(D1)} and \textbf{(D2)}''. Similarly, we
    use the term \textbf{(D1)-(D2)-(P1)}, which is short for ``all three
    conditions \textbf{(D1)}, \textbf{(D2)} and \textbf{(P1)}''} (that
    is, \textbf{(D1)-(D2)} or \textbf{(F1)-(F2))} holds and
     \beq\label{check} [\tau_1-i_1\hat d_1-d_{2},-\frac{2}{\hat
     d_1},\tau_0-(1-\frac{1}{2(1+i_0)})d_0-i_{1}\hat
    d_{1}]>[\tau_1-i_1\hat d_1-d_{2},-\frac{2}{\hat
    d_1},\tau_0-d_0-i_{1}\hat d_{1}].
      \eeq

\item[\textbf{(P2)}] \textbf{(D1)-(F2)} hold.
 \end{enumerate}
\end{pro}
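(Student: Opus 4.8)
The three parts all concern the two image curvatures $\cB(DF(V_x^d))$ and $\cB(DF(V_x^p))$, and my plan is to show that, after Lemma \ref{reduce2}, both are values of a single two-step continued fraction, so that the whole proposition becomes an elementary study of one real function. First I would start from the formula of Lemma \ref{reduce2},
\[
\cB(DF(V))=[d_2,\tfrac{2i_2}{d_2},\bar\tau_1,\hat\cR_1,\hat\tau_0,\hat\cR_0,-i_0\hat d_0,\cB(V)],
\]
and substitute the two distinguished values $\cB(V_x^d)=1/d_0$ and $\cB(V_x^p)=0$. The innermost four-term tail $[\hat\tau_0,\hat\cR_0,-i_0\hat d_0,\cB(V)]$ collapses after a short telescoping computation using $\hat d_0=d_0/(1+i_0)$ and $\hat\cR_0=-2/\hat d_0$: since $[a,0]=0$ while $[a,1/d_0]$ telescopes through $\hat d_0$, for $V_x^d$ one obtains the effective last entry $e_d:=\tau_0-d_0-i_1\hat d_1$, and for $V_x^p$ one obtains $e_p:=\tau_0-(1-\tfrac{1}{2(1+i_0)})d_0-i_1\hat d_1$. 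Hence $\cB(DF(V_x^d))=[d_2,\tfrac{2i_2}{d_2},\bar\tau_1,\hat\cR_1,e_d]$ and $\cB(DF(V_x^p))=[d_2,\tfrac{2i_2}{d_2},\bar\tau_1,\hat\cR_1,e_p]$. Writing $W_\bullet:=\bar\tau_1+[\hat\cR_1,e_\bullet]$, so that $1/W_\bullet=[\bar\tau_1,\hat\cR_1,e_\bullet]$ are exactly the continued fractions appearing in \eqref{check} and inside \textbf{(D1)}--\textbf{(F2)}, both image curvatures take the common form $g(W_\bullet)$ with $g(W):=[d_2,\tfrac{2i_2}{d_2},W]$.

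Next I would dispatch parts (I) and (II) by analyzing $g$ alone. Since $g(W)=\bigl(d_2+(\tfrac{2i_2}{d_2}+\tfrac1W)^{-1}\bigr)^{-1}$, one checks directly that $g(W)<1/d_2$ is equivalent to $\tfrac{2i_2}{d_2}+\tfrac1W>0$, and that this positivity forces $d_2+(\tfrac{2i_2}{d_2}+\tfrac1W)^{-1}>d_2>0$, hence $g(W)>0$ as well; moreover $\tfrac{2i_2}{d_2}+\tfrac1W>0$ holds precisely when $W>0$ or $W<-\tfrac{d_2}{2i_2}$. Applying this with $W=W_d$ gives part (I), the two alternatives being exactly \textbf{(D1)} and \textbf{(F1)}, and with $W=W_p$ gives part (II), the alternatives being \textbf{(D2)} and \textbf{(F2)}. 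This is the routine half of the argument.

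Part (III) is the crux. Granting (I) and (II), both images already lie in the arc $(0,1/d_2)$, so what remains is to order them, i.e.\ to certify that the edges of the cone are not interchanged by $DF$. On any branch on which $\tfrac{2i_2}{d_2}+\tfrac1W>0$, the function $g$ is strictly monotone in $1/W=[\bar\tau_1,\hat\cR_1,e]$, so the relative position of $\cB(DF(V_x^p))$ and $\cB(DF(V_x^d))$ is governed by the comparison of $[\bar\tau_1,\hat\cR_1,e_p]$ with $[\bar\tau_1,\hat\cR_1,e_d]$, which is precisely the content of \eqref{check}. The delicate point is that $[\hat\cR_1,e]$ can pass through a pole (through $\infty$ on $\mathbb{R}\cup\{\infty\}$) as $e$ runs from $e_d$ to $e_p$, so the naive monotonicity may reverse and one must first determine on which branch $W_d$ and $W_p$ sit. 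This is exactly why the statement separates the divergent conditions \textbf{(D)} from the focusing conditions \textbf{(F)}, and why the mixed pairing \textbf{(P2)}$=$\textbf{(D1)}-\textbf{(F2)} is singled out: there the two edges fall on opposite sides of the pole. I expect this sign/branch bookkeeping to be the main obstacle, and I would resolve it by transporting the cyclic order $0\prec 1/d_0$ forward through the reflection series by means of Proposition \ref{order}, thereby guaranteeing that the image arc $\bigl(\cB(DF(V_x^p)),\cB(DF(V_x^d))\bigr)$ lands inside $(0,1/d_2)$ without wrapping around $\infty$, which is exactly the ordering asserted in (III).
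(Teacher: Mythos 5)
Your reduction of both image curvatures to a single M\"obius function $g$ of the quantity $W_\bullet=\bar\tau_1+[\hat\cR_1,e_\bullet]$, and your treatment of parts (I) and (II), are correct and essentially identical to the paper's proof: the collapse of the tail to the effective last entries $e_d=\tau_0-d_0-i_1\hat d_1$ and $e_p=\tau_0-(1-\tfrac{1}{2(1+i_0)})d_0-i_1\hat d_1$ is exactly the paper's computation, and the equivalence of $0<g(W)<1/d_2$ with ``$W>0$ or $W<-d_2/(2i_2)$'' is precisely how the paper obtains \textbf{(D1)}/\textbf{(F1)} and \textbf{(D2)}/\textbf{(F2)}.

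Part (III) is where the proposal stops short, and the step you defer is the whole content of that part. Writing $u_\bullet=[\bar\tau_1,\hat\cR_1,e_\bullet]=1/W_\bullet$, the function $g$ expressed in $u$ is the M\"obius map $u\mapsto (u+\tfrac{2i_2}{d_2})/(d_2u+2i_2+1)$, with determinant $+1$, hence \emph{increasing} and pole-free on the branch $u>-\tfrac{2i_2}{d_2}$ singled out by (I)--(II); so the required ordering is $u_p<u_d$. What remains -- and what the paper actually carries out -- is the sign analysis translating $u_p<u_d$ into the language of the $W$'s: under \textbf{(D1)-(D2)} or \textbf{(F1)-(F2)} the two $W$'s have the same sign and $u_p<u_d$ becomes $W_p>W_d$; under \textbf{(D1)-(F2)} one has $u_p<0<u_d$ automatically; and under the excluded pairing \textbf{(F1)-(D2)} one has $u_d<0<u_p$, so the order is necessarily reversed. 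Your proposed substitute for this case analysis, transporting the cyclic order via Proposition \ref{order}, cannot work: that proposition holds unconditionally, so if it yielded the linear order asserted in (III) it would do so equally in the \textbf{(F1)-(D2)} case, where (III) fails; cyclic-order preservation together with (I)--(II) does not determine which of the two image curvatures is larger inside $(0,1/d_2)$. Finally, completing the monotonicity computation would also have flagged that the comparison you call ``precisely the content of \eqref{check}'' must be $u_p<u_d$, the \emph{reverse} of the inequality literally displayed in \eqref{check}; the paper's own proof (its bullet $0<[\bar\tau_1,\ldots,e_p]<[\bar\tau_1,\ldots,e_d]$) and the condition (\textbf{P1}$'$) used in Proposition \ref{returnf} both take the reversed direction, so \eqref{check} as printed appears to carry a sign typo that your argument, as written, would silently inherit rather than detect.
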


The statements of Proposition \ref{suffness} is rather technical, but the
proof is quite straightforward. Geometrically, it gives different criterions
when the cone bounded by $V_x^p$ and $V_x^d$ is mapped under $DF$ to the cone
bounded by $V_{Fx}^p$ and $V_{Fx}^d$, see Proposition \ref{returnf} for more
details.

\begin{proof}
Note that $\cB(V^d_x)=1/d_0(x)$ and $\cB(V^p_x)=0$. So the curvatures of
$DF(V_x^d)$ and $DF(V_x^p)$ are given by
\begin{align*}
\cB(DF(V_x^d))&=[d_2,\frac{2i_2}{d_2},\bar\tau_1,-\frac{2}{\hat d_1},\hat\tau_0,
-\frac{2}{\hat d_0},-i_0\hat d_0+d_0]
=[d_2,\frac{2i_2}{d_2},\bar\tau_1,-\frac{2}{\hat d_1},\hat\tau_0-\hat d_0]\\
&=[d_2,\frac{2i_2}{d_2},\bar\tau_1,
-\frac{2}{\hat d_1},\tau_0-d_0-i_{1}\hat d_{1}];\\
\cB(DF(V_x^p))&=[d_2,\frac{2i_2}{d_2},\bar\tau_1,-\frac{2}{\hat d_1},\hat\tau_0,
-\frac{2}{\hat d_0}]=[d_2,\frac{2i_2}{d_2},\bar\tau_1,
-\frac{2}{\hat d_1},\hat\tau_0-\frac{\hat d_0}{2}]\\
&=[d_2,\frac{2i_2}{d_2},\bar\tau_1,
-\frac{2}{\hat d_1},\tau_0-pd_0-i_{1}\hat d_{1}],
\text{ where }p=1-\frac{1}{2(1+i_0)}.
\end{align*}
Here we use the facts that $\hat\tau_0=\tau_0-i_0\hat d_0-i_{1}\hat d_{1}$
and $[\cdots,a,b,0]=[\cdots,a]$ whenever $b\neq 0$. Then it is easy to see
that (I) $0<\cB(DF(V_x^d))<1/d_2$ is equivalent to
 \beq \label{equi1}
[\frac{2i_2}{d_2},\bar\tau_1, -\frac{2}{\hat d_1},\tau_0-d_0-i_{1}\hat
d_{1}]>0.
 \eeq
Moreover, \eqref{equi1}  holds if and only if one of the following conditions
holds:
\begin{itemize}
\item $[\bar\tau_1, -\frac{2}{\hat d_1},\tau_0-d_0-i_{1}\hat d_{1}]>0$,
    which corresponds to \textbf{(D1)};

\item $0>[\bar\tau_1, -\frac{2}{\hat d_1},\tau_0-d_0-i_{1}\hat
    d_{1}]>-\frac{2i_2}{d_2}$, which corresponds to \textbf{(F1)}.
\end{itemize}
We can derive \textbf{(D2)} and \textbf{(F2)} from (II) in the same way.

\vskip.3cm

 Condition (III) $0<\cB(DF(V_x^p))<\cB(DF(V_x^d))<1/d_2$ is
equivalent to
\begin{align}
&[\frac{2i_2}{d_2},\bar\tau_1,
-\frac{2}{\hat d_1},\tau_0-pd_0-i_{1}\hat d_{1}]>[\frac{2i_2}{d_2},\bar\tau_1,
-\frac{2}{\hat d_1},\tau_0-d_0-i_{1}\hat d_{1}]>0.\label{equi2}
\end{align}
Then \eqref{equi2}  holds if and only if one of the following conditions
holds:
\begin{itemize}
\item $0<[\bar\tau_1, -\frac{2}{\hat d_1},\tau_0-pd_0-i_{1}\hat
    d_{1}]<[\bar\tau_1, -\frac{2}{\hat d_1},\tau_0-d_0-i_{1}\hat d_{1}]$,
which is \textbf{(D1)-(D2)-(P1)};

\item $[\bar\tau_1, -\frac{2}{\hat d_1},\tau_0-pd_0-i_{1}\hat
    d_{1}]<[\bar\tau_1, -\frac{2}{\hat d_1},\tau_0-d_0-i_{1}\hat
    d_{1}]<-\frac{d_2}{2i_2}$, which is \textbf{(F1)-(F2)-(P1)};

\item $[\bar\tau_1, -\frac{2}{\hat d_1},\tau_0-d_0-i_{1}\hat d_{1}]>0$
    and $[\bar\tau_1, -\frac{2}{\hat d_1},\tau_0-pd_0-i_{1}\hat
    d_{1}]<-\frac{d_2}{2i_2}$, which is \textbf{(D1)-(F2)}, or equivalently, \textbf{(P2)}.
\end{itemize}
This completes the proof of the proposition.
\end{proof}

\begin{remark}
It is worth pointing out the following observations:
\begin{itemize}
\setlength{\itemsep}{3pt}
\item[(1).] The conditions \textbf{(F1)} and \textbf{(F2)} are empty when
    $i_2=0$.

\item[(2).] The arguments in the proof of Proposition \ref{suffness}
    apply to general billiards with several circular boundary components.
    That is, for any consecutive circular components that a billiard
    orbit passes, say $\Gamma_k$, $k=0,1,2$, let $i_k$ be the number of
    reflections of the orbit on $\Gamma_k$. Then we have the same
    characterizations for $Fx=\cF^{i_0+i_1+i_2+2}x$.

\item[(3).] In the case when all arcs $\Gamma_k$ lie on the same circle,
    the left-hand sides of \textbf{(D1)} and \textbf{(F1)} are zero, and (I) always fails.
    This is quite natural, since the circular billiard is well-known to
    be not hyperbolic, but parabolic.
\end{itemize}
\end{remark}

The following is an application of Proposition \ref{suffness} to asymmetric
lemon tables satisfying  Assumptions \textbf{(A0)}--\textbf{(A3)}.
\begin{pro}\label{returnf}
Let $Q(b,R)$ be an asymmetric lemon billiard satisfying the assumptions
\textbf{(A0)}--\textbf{(A3)}. Then for a.e. $x\in M$,
\begin{equation}\label{cone}
0=\cB(V_{Fx}^p)<\cB(DF(V_x^p))<\cB(DF(V_x^d))<\cB(V_{Fx}^d)=1/d(Fx).
\end{equation}
\end{pro}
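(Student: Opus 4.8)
The goal is to establish the chain of inequalities in \eqref{cone}, which says that the cone $[\cB(V_x^p),\cB(V_x^d)] = [0,1/d_0]$ is mapped strictly inside the cone $[0,1/d_2]$ at $Fx$ under $DF$. My plan is to reduce this to an application of Proposition \ref{suffness}, checking that the assumptions \textbf{(A1)}--\textbf{(A3)} imply the relevant conditions \textbf{(D1)}, \textbf{(D2)}, \textbf{(P1)} (or the \textbf{(F)}-variants) on each of the three pieces $X_0$, $X_1$, $X_2$ separately. The two outer equalities in \eqref{cone} are immediate from the definitions $\cB(V_{Fx}^p)=0$ and $\cB(V_{Fx}^d)=1/d(Fx)=1/d_2$, so the content is the strict invariance $0<\cB(DF(V_x^p))<\cB(DF(V_x^d))<1/d_2$, which is precisely conclusion (III) of Proposition \ref{suffness}.

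First I would split into the three cases. On $X_2$ (where $i_1=i_2=0$), the formula \eqref{fract2} collapses dramatically via Remark \ref{remark-i2}, and the conditions \textbf{(D1)}, \textbf{(D2)} become statements about the short continued fraction $[\hat\cR_1,\tau_0-pd_0]$ type expressions; here assumption \textbf{(A3)}, $\tau_0\le d_1/2$, should directly force the positivity in \textbf{(D1)}-\textbf{(D2)} and the ordering \textbf{(P1)}. On $X_1$ (where $i_1=0$ but $i_2\ge1$), I would use assumption \textbf{(A2)}, noting that $i_1=0$ means $\hat d_1=d_1$ and the $\hat\cR_1$-entry simplifies; the inequality $\tau_0+\tau_1<(1-\frac{1}{2(1+i_0)})d_0+d_1+(1-\frac{1}{2i_2})d_2$ together with $\frac{d_0}{1+i_0}<d_1$ should be engineered to land in one of the paired regimes \textbf{(F1)-(F2)} or the mixed \textbf{(P2)}. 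On $X_0$ (where $i_1\ge1$, and by \textbf{(A1)} also $i_0\ge1$, $i_2\ge1$), I would feed the two inequalities in \eqref{assume1} into the continued-fraction expressions appearing in \textbf{(D1)}/\textbf{(D2)} and \textbf{(F1)}/\textbf{(F2)}.

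The key computational device throughout is that each of the conditions in Proposition \ref{suffness} is an inequality about a two- or three-term continued fraction of the form $\bar\tau_1+[-\frac{2}{\hat d_1},\,\tau_0-\alpha d_0-i_1\hat d_1]$, and that $[-\frac{2}{\hat d_1},t]=\frac{1}{-\frac{2}{\hat d_1}+\frac{1}{t}}$ is a monotone function of the inner argument $t$ as long as one tracks the sign of the denominator carefully. So I would first isolate the sign of each inner continued fraction using the bounds on $\tau_0,\tau_1$ provided by \textbf{(A1)}--\textbf{(A3)}, then propagate those signs outward one entry at a time. The monotonicity needed to deduce \textbf{(P1)} (the strict ordering $\cB(DF(V_x^p))<\cB(DF(V_x^d))$, equivalently the ordering of the two inner arguments $\tau_0-pd_0$ versus $\tau_0-d_0$ with $p=1-\frac{1}{2(1+i_0)}<1$) is automatic since $\tau_0-pd_0>\tau_0-d_0$, provided we stay within a single monotone branch; I would check that no sign change of an intermediate denominator occurs between the two arguments, which is exactly what the quantitative strict inequalities in \textbf{(A1)}--\textbf{(A2)} guarantee.

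The main obstacle I anticipate is the bookkeeping on $X_0$: there the continued fractions retain all their entries ($i_0,i_1,i_2\ge1$, $\hat d_1\neq d_1$), and one must verify that the two separate bounds in \eqref{assume1}---one controlling $\tau_0$, the other controlling $\tau_1$---combine correctly so that the outer entry $\bar\tau_1=\tau_1-i_1\hat d_1-d_2$ and the inner fraction cooperate to satisfy a consistent pair (\textbf{(D1)-(D2)}, \textbf{(F1)-(F2)}, or \textbf{(P2)}). In particular I expect the delicate point to be ruling out the ``bad'' intermediate regime where the inner continued fraction lies in the forbidden gap $[-\frac{d_2}{2i_2},0)$ for one argument but not the other, since that is precisely where the monotonicity argument for \textbf{(P1)} could break down and where the factor $\frac{i_1}{1+i_1}d_1$ in \eqref{assume1} must be doing its work. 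Verifying that \textbf{(A1)} excludes this gap on all of $X_0$ is where I would concentrate the careful estimates.
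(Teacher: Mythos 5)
Your high-level skeleton matches the paper's: reduce to Proposition \ref{suffness} and verify one of the combinations \textbf{(D1)-(D2)-(P1)}, \textbf{(F1)-(F2)-(P1)} or \textbf{(P2)} separately on $X_0$, $X_1$, $X_2$, using monotonicity of the continued fractions in the inner argument. But as it stands the proposal has a genuine gap, and it also misjudges where the work lies. On $X_0$ the inequalities \eqref{assume1} are engineered so that $\tau_0-d_0-i_1\hat d_1<\tau_0-(1-\tfrac{1}{2(1+i_0)})d_0-i_1\hat d_1<0$ and $\tau_1-i_1\hat d_1-d_2<-\tfrac{d_2}{2i_2}$ are immediate; since $[-\tfrac{2}{\hat d_1},t]<0$ whenever $t<0$, the combination \textbf{(F1)-(F2)-(P1)} follows in two lines and there is no ``bad intermediate regime'' to exclude there. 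The real content of the proof is the case $x\in X_1$, which you dispose of in one sentence. The paper subdivides $X_1$ into five subcases according to the position of $\tau_0$ in $(0,d_0+d_1)$ relative to the breakpoints $pd_0<d_0<pd_0+\tfrac{d_1}{2}<d_0+\tfrac{d_1}{2}$ (the ordering $d_0<pd_0+\tfrac{d_1}{2}$ is exactly what the hypothesis $\tfrac{d_0}{1+i_0}<d_1$ in \textbf{(A2)} buys), and different subcases land in \emph{different} regimes: \textbf{(D1)-(D2)-(P1)} for $\tau_0<pd_0+\tfrac{d_1}{2}$, the mixed \textbf{(P2)} in the window $[pd_0+\tfrac{d_1}{2},\,d_0+\tfrac{d_1}{2})$, and \textbf{(F1)-(F2)-(P1)} beyond. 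Your guess that $X_1$ always falls into \textbf{(F1)-(F2)} or \textbf{(P2)} is therefore wrong on a large part of $X_1$.

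More importantly, assumptions \textbf{(A2)}--\textbf{(A3)} alone do not suffice to run your plan: several subcases of $X_1$ (the verification of \textbf{(D1)} in particular) require the geometric inequality $\tau_0+\tau_1>d_0+d_2$ when $i_1=0$, which is not among \textbf{(A1)}--\textbf{(A3)} but is the separate unfolding (mirror) lemma proved at the start of Section \ref{Sec:4} from $b>R-1$. Without it you cannot conclude $\tau_1-d_2>d_0-\tau_0>0$ when $\tau_0<d_0$, and the positivity of the outer continued fraction is exactly what is at stake there; your proposal never invokes this input, so the argument would stall on $X_1$. A smaller caution: the inequality \eqref{check} as displayed is oriented for the \textbf{(D1)-(D2)} branch, and the ordering needed in the \textbf{(F1)-(F2)} branch is the reverse one (as the bullets in the proof of Proposition \ref{suffness} show), so ``check \eqref{check}'' cannot be applied verbatim in both regimes --- your instinct to track the sign of each intermediate denominator is the right one, but it must actually be carried out branch by branch.
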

In comparing to Proposition \ref{suffness}, the statement of Proposition
\ref{returnf} is very clear, but the proof is quite technical and a little
bit tedious. From the proof, one can see that $\frac{d_0}{1+i_0}<d_1$ is a
very subtle assumption to guarantee $d_0<pd_0+\frac{d_1}{2}$. One can check
that \eqref{cone} may fail if $d_0>pd_0+\frac{d_1}{2}$. This is why we need
$\frac{d_0}{1+i_0}<d_1$ in the assumption (\textbf{A2}).

\begin{proof}
It suffices to show that for a.e. point $x\in M$,  one of the following
combinations holds: \textbf{(D1)-(D2)-(P1)}, or \textbf{(F1)-(F2)-(P1)}, or
\textbf{(D1)-(F2)}.

\vskip.3cm

\noindent{\bf Case 1.} Let  $x\in X_0$. Then Eq. \eqref{assume1} implies that
\[\tau_0-d_0-i_{1}\hat d_{1}<\tau_0-(1-\frac{1}{2(1+i_0)})d_0-i_{1}\hat
d_{1}<0,\quad \tau_1-i_1\hat d_1-d_{2}<-\frac{d_2}{2i_2}.\]
Therefore, \textbf{(F1)-(F2)-(P1)} hold.

\vskip.3cm

\noindent{\bf Case 2.} Let $x\in X_1$. Note that $i_1=0$ and $\hat d_1=d_1$.
We denote $p=1-\frac{1}{2(1+i_0)}$ and $q=1-\frac{1}{2i_2}$, and rewrite the
the corresponding conditions using $i_1=0$:

 \begin{enumerate}
\item[(\textbf{D1}$'$).] $\tau_1-d_{2}+[-\frac{2}{d_1},\tau_0-d_0]>0$, or
    equivalently,
    $\tau_1-d_{2}+\cfrac{1}{-\frac{2}{d_1}+\frac{1}{\tau_0-d_0}}>0$;

\item[(\textbf{F1}$'$).]  $\tau_1-d_{2}+[-\frac{2}{d_1},
    \tau_0-d_0]<-\frac{d_2}{2i_2}$, or equivalently,
    $\tau_1-qd_{2}+\cfrac{1}{-\frac{2}{d_1}+\frac{1}{\tau_0-d_0}}<0$;

\item[(\textbf{D2}$'$).] $\tau_1-d_{2}+[-\frac{2}{d_1}, \tau_0-pd_0]>0$, or
    equivalently,
    $\tau_1-d_{2}+\cfrac{1}{-\frac{2}{d_1}+\frac{1}{\tau_0-pd_0}}>0$;

\item[(\textbf{F2}$'$).]  $\tau_1-d_{2}+[-\frac{2}{d_1},
    \tau_0-pd_0]<-\frac{d_2}{2i_2}$,  or equivalently,
    $\tau_1-qd_{2}+\cfrac{1}{-\frac{2}{d_1}+\frac{1}{\tau_0-d_0}}<0$;

\item[(\textbf{P1}$'$).] $[\tau_1-d_{2}, -\frac{2}{d_1},
    \tau_0-pd_0]<[\tau_1-d_{2},-\frac{2}{d_1}, \tau_0-d_0]$.
\end{enumerate}
Note that $0<\tau_0<d_0+d_1$. There are several subcases when $\tau_0$ varies
in $(0,d_0+d_1)$, see Fig. \ref{subcases}.

\begin{figure}[htb]
\centering
\begin{overpic}[width=140mm]{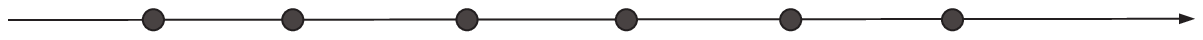}
\put(14,-1){$0$}
\put(24,-1){$pd_0$}
\put(38,-1){$d_0$}
\put(47,-1){$pd_0+\frac{d_1}{2}$}
\put(61,-1){$d_0+\frac{d_1}{2}$}
\put(74,-1){$d_0+d_1$}
\put(94,-1){$\tau_0$}
\put(19,5){(a)}
\put(33,5){(b)}
\put(46,5){(c)}
\put(57,5){(d)}
\put(72,5){(e)}
\end{overpic}
\caption{Subcases of Case 2 according to the values of $\tau_0$.
Note that $d_0<pd_0+\frac{d_1}{2}$,
which follows from the assumption that
$\frac{d_0}{1+i_0}<d_1$.}
\label{subcases}
\end{figure}

\noindent{\bf Subcase (a).} Let $\tau_0< pd_0$. Hence $\tau_1>d_2$ and
$0<\frac{1}{\tau_1-d_2}<\frac{1}{d_0-\tau_0}<\frac{1}{pd_0-\tau_0}$. Then we
claim that \textbf{(D1$'$)-(D2$'$)-(P1$'$)} hold.

\noindent{\it Proof of Claim.} Note that $\tau_0+\tau_1> d_0+d_2$, or equally
$\tau_1-d_2> d_0-\tau_0>0$. So
$\frac{1}{\tau_1-d_{2}}<\frac{1}{d_0-\tau_0}<\frac{2}{d_1}+\frac{1}{d_0-\tau_0}$
which implies (\textbf{D1}$'$). Similarly,
$\frac{1}{\tau_1-d_{2}}<\frac{1}{pd_0-\tau_0}<\frac{2}{d_1}+\frac{1}{pd_0-\tau_0}$
implies (\textbf{D2}$'$). If both terms in (\textbf{P1}$'$) are positive,
then (\textbf{P1$'$}) is equivalent to $[-\frac{2}{d_1},
\tau_0-pd_0]>[-\frac{2}{d_1}, \tau_0-d_0]$. Analogously, if both terms in
(\textbf{P1}$'$) are negative, then (\textbf{P1}$'$) is equivalent to
$[\tau_0-pd_0]<[\tau_0-d_0]$, or equivalently, $\tau_0-pd_0>\tau_0-d_0$,
which holds trivially. This completes the proof of the claim.

\vskip.2cm

\noindent{\bf Subcase (b).} Let  $pd_0\le \tau_0< d_0$. The proof of
(\textbf{D1}$'$) in the previous case is still valid. For (\textbf{D2}$'$),
we note that $\tau_0<d_0<pd_0+\frac{d_1}{2}$ and hence
$-\frac{2}{d_1}+\frac{1}{\tau_0-pd_0}>0$. So (\textbf{D2}$'$) follows. Since
both terms are positive, (\textbf{P1}$'$) follows from $[-\frac{2}{d_1},
\tau_0-pd_0]>0>[-\frac{2}{d_1}, \tau_0-d_0]$.

\vskip.2cm

\noindent{\bf Subcase (c).} Let $d_0\le\tau_0<pd_0+d_1/2$. There are two
subcases:

$\bullet$ $\tau_1> d_2$. The proof is similar to Case (b), and
\textbf{(D1$'$)-(D2$'$)-(P1$'$)} follows.

$\bullet$ $\tau_1\le d_2$. Note that $0<\frac{1}{\tau_0-pd_0} -\frac{2}{d_2}
<\frac{1}{\tau_0-d_0} -\frac{2}{d_2}< \frac{1}{d_{2}-\tau_1}$, from which
\textbf{(D1$'$)-(D2$'$)-(P1$'$)} follows.

\vskip.2cm

\noindent{\bf Subcase (d).} Let $pd_0+d_1/2\le \tau_0<d_0+d_1/2$, which is
equivalent to $\frac{1}{\tau_0-pd_0}-\frac{2}{d_1}<0
<\frac{1}{\tau_0-d_0}-\frac{2}{d_1}$. Using  Assumption \textbf{(A2)} that
$\tau_0+\tau_1>d_0+d_2$, we see that (\textbf{D1}$'$) always holds (as in Subcase
2(c)). Using the condition $\tau_0+\tau_1<pd_0+d_1+qd_2$, we see that
$\tau_1-qd_2<\frac{d_1}{2}$. There are two subcases:

$\bullet$ $\tau_1-qd_2<0$. Then (\textbf{F2}$'$) holds trivially.

$\bullet$ $\tau_1-qd_2>0$. Then $\frac{1}{\tau_1-qd_2}>\frac{2}{d_1}$ implies
(\textbf{F2}$'$).

\vskip.2cm

\noindent{\bf Subcase (e).} Let $\tau_0\ge d_0+d_1/2$. There are also two
subcases:

$\bullet$ $\tau_1-qd_2<0$. Then $\frac{1}{\tau_1-qd_2}<0
<\frac{2}{d_1}-\frac{1}{\tau_0-d_0}< \frac{2}{d_1}-\frac{1}{\tau_0-pd_0}$,
and \textbf{(F1$'$)-(F2$'$)-(P1$'$)} hold.

$\bullet$ $\tau_1-qd_2>0$. Then $\frac{1}{\tau_1-qd_2}>\frac{2}{d_1}$ implies
\textbf{(F1$'$)-(F2$'$)-(P1$'$)}.

\vskip.3cm

\noindent{\bf Case 3.} Let $x\in X_2$. Then $\tau_0<d_1/2$ by assumption. The
proof of Case 2 (a)-(c) also works this case, and
\textbf{(D1$'$)-(D2$'$)-(P1$'$)} hold.
\end{proof}

%\vskip.4cm

\subsection{Proof of Theorem \ref{hyper}}\label{provehyper}
Let $\cS_\infty=\bigcup_{n\in\bZ}\cF^{-n}\cS_1$. Note that
$\mu(\cS_\infty)=0$. Let $x\in M\backslash \cS_\infty$. The push-forward
$V_n^p(x)=DF^n(V^p_{F^{-n}x})$ and $V_n^d(x)=DF^n(V^d_{F^{-n}x})$ are well
defined for all $n\ge 1$, and together generate two sequences of tangent
vectors in $T_x\cM$. By Proposition \ref{order} and Lemma \ref{returnf}, we
get the following relation inductively:
\begin{align*}
0=\cB(V_x^p)<\cdots<
\cB(V^p_n(x))&<\cB(V^p_{n+1}(x))\\
&<\cB(V_{n+1}^d(x))<\cB(V^d_n(x))<\cB(V^d_x)=1/d(x).
\end{align*}
Therefore $\ds \cB^u(x):=\lim_{n\to\infty} \cB(V^d_n(x))$ exists, and
$0<\cB^u(x)<1/d(x)$. Let $E^u_x=\langle V^u_x\rangle$ be the corresponding
subspace in $T_x\cM$ for all $x\in M\backslash S_\infty$.

Let $\Phi:\cM\to\cM, (s,\varphi)\mapsto (s,-\varphi)$ be the time reversal
map on $\cM$. Then we have $\cF\circ\Phi=\Phi\circ\cF^{-1}$, and
$F\circ\Phi=\Phi\circ F^{-1}$. In particular, $V^s_x=D\Phi(V^u_{\Phi x})$
satisfies $-1/d(\Phi x)<\cB(V^s_x)=-\cB(V^u_{\Phi x})<0$, which corresponds
to a stable vector for each $x\in M\backslash \cS_{\infty}$. Therefore,
$T_x\cM=E^u_x\oplus E^s_x$ for every point $x\in M\backslash \cS_{\infty}$,
and such a point $x$ is a hyperbolic point of the induced billiard map $F$.

Next we show that the original billiard map $\cF$ is hyperbolic. It suffices
to show that the Lyapunov exponent $\chi^+(\cF,x)>0$ for a.e. $x\in M$, since
$(\cM,\cF,\mu)$ is a (discrete) suspension over $(M,F,\mu_M)$ with respect to
the first return time function, say $\xi_M$, which is given by
$\xi_M(x)=i_0+i_1+i_2+2$ (see \S~\ref{induced}). Note that $\int_M \xi_M
d\mu_M=\frac{1}{\mu(M)}$. So the averaging return time
$\overline{\xi}(x)=\lim_{k\to\infty}\frac{\xi_k(x)}{k}$ exists for a.e. $x\in
M$, where  $\xi_k(x)=\xi_M(x)+\cdots+\xi_M(F^{k-1}x)$ be the $k$-th return
time of $x$ to $M$. Moreover, $1\le \overline{\xi}(x)<\infty$ for a.e. $x\in
M$. Then for a.e. $x\in M$, we have
\[\chi^+(\cF,x)=\lim_{n\to\infty}\frac{1}{n}\log\|D_x\cF^n\|
=\lim_{k\to\infty}\frac{k}{\xi_k(x)}\cdot\frac{1}{k}\log\|D_xF^k\|
=\frac{1}{\overline{\xi}(x)}\cdot\chi^+(F,x)>0.\] This completes the proof.
\qed

\subsection{Proof of Theorem \ref{main}}\label{provemain}
Let $\Gamma_1$ be a major arc of the unit circle with endpoints $A,B$
satisfying $|AB|<1$. For $R>1$, it is easy to see that
$b=(R^2-|AB|^2/4)^{1/2}-(1-|AB|^2/4)^{1/2}>R-1$. Moreover, $b>1$ as long as
$R>2$. In the following, we will assume $R>2$. Therefore, (\textbf{A0})
holds. Then it suffices to show that the assumptions
(\textbf{A1})--(\textbf{A3}) hold for the tables $Q(R)=Q(b,R)$ considered in
Theorem \ref{main}.

\begin{figure}[htb]
\centering
\begin{overpic}[width=50mm]{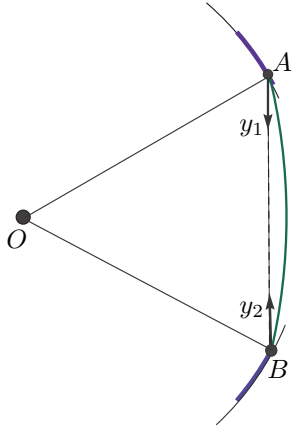}
\put(53,68){$y_1$}
\put(53,29){$y_2$}
\put(3,43){$O$}
%\put(43,65){$r$}
\put(60,81){$A$}
\put(59,15){$B$}
\end{overpic}
\caption{First restriction on $R$.
The thickened pieces on $\Gamma_1$ are related to $U$.}
\label{assume-fig0}
\end{figure}
Let $y_1=\overrightarrow{AB}$ and $y_2=\overrightarrow{BA}$ be the two points
in the phase space $\cM$ moving along the chord $AB$, see Fig.
\ref{assume-fig0}. Note that $y_i\in\cS_0$. Since $|AB|<1$, we have $\angle
AOB < \frac{\pi}{3}$, or equally, $n^\ast:=\left\lfloor\frac{2\pi}{\angle
AOB}\right\rfloor\ge 6$, where $\lfloor t\rfloor$ is the largest integer
smaller than or equal to $t$. Then for each $i=1,2$, there is a small
neighborhood $U_i\subset \cM$ of $y_i$, such that for any point $x\in
U_i\cap\cM_1\backslash \cF^{-1}\cM_1$, one has
\begin{itemize}
\item $\cF x$ enters $\cM_R$ and stays on $\cM_R$ for another $i_1$
    iterates, where $i_1=\eta(\cF x)$,

\item $\cF^{i_1+2}x\in M_n$ for some $n\ge n^\ast-1=5$.
\end{itemize}
See Section \ref{cfrac} for the definitions of $\eta(\cdot)$ and $M_n$. Let
$U=(U_1\cup U_2)\cap\cM_1\backslash \cF^{-1}\cM_1$. See Fig.
\ref{assume-fig0}, where the bold pieces on $\Gamma_1$ indicate the bases of
$U$. Note that the directions of vectors in $U$ are close to the vertical
direction and are pointing to some points on $\Gamma_R$.

Before moving on to the verification of the assumptions
(\textbf{A1})--(\textbf{A3}), we need the following lemma. Note that for each
$x\in M$, $\cF^{i_0}x$ is the last reflection of $x\in M$ on $\Gamma_1$
before hitting $\Gamma_R$. That is, $\cF^{i_0}x=\cF^{-1}x_1$. See Fig.
\ref{assume-left} and \ref{assume-right} for two illustrations.
\begin{lem}\label{largeR} For
any $\epsilon>0$, there exists $R_0=R(\epsilon)>2$ such that for any $R\ge
R_0$, the following hold for the induced map $F:M\to M$ of the billiard table
$Q(R)$. That is, for any point $x\in M$ with $d_1\le 4$,
\begin{enumerate}
\item $\cF^{i_0}x\in U$ and $\cF^{i_0+i_1+2}x\in \bigcup_{n\ge 5}M_n$;

\item the reflection points of $\cF^{i_0}x$ and of $\cF^{i_0+i_1+2}x$,
    both on $\Gamma_1$, are $\epsilon$-close to the corners $\{A,B\}$;

\item  the total length of the trajectory from $\cF^{i_0}x$ to
    $\cF^{i_0+i_1+2}x$ is bounded from above by $|AB|+\epsilon$.
\end{enumerate}
\end{lem}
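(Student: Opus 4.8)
The plan is to use that, as $R\to\infty$, the arc $\Gamma_R$ collapses onto the chord $AB$, so that an orbit reflecting on $\Gamma_R$ at a uniformly grazing angle is forced to hug the chord and to enter and leave $\Gamma_R$ only near the two corners. I would place the chord $AB$ on the $x$-axis with $A=(-|AB|/2,0)$ and $B=(|AB|/2,0)$, so that $\Gamma_R$ lies in the thin strip $0\le y\le h_R$ with $h_R=R-\sqrt{R^2-|AB|^2/4}=O(1/R)$, while the major arc $\Gamma_1$ meets the line $y=0$ only at $A,B$ and stays a definite height above the chord away from them. The first elementary remark is the grazing bound: since every reflection of the orbit on the circular arc $\Gamma_R$ occurs at the same angle $\varphi_1$ from the normal and $d_1=R\cos\varphi_1\le 4$, we have $\cos\varphi_1\le 4/R$, so $\varphi_1\to\pi/2$ uniformly in $x$. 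As the inward normal of $\Gamma_R$ is within $O(1/R)$ of the vertical, each free segment abutting a $\Gamma_R$-reflection makes an angle $O(1/R)$ with the horizontal; in particular the segment from $\cF^{i_0}x$ to the first reflection $p_1$ on $\Gamma_R$, and the one from the last reflection on $\Gamma_R$ to $\cF^{i_0+i_1+2}x$, are nearly horizontal and sit at height $O(1/R)$.

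The heart of the argument is the geometric claim that a nearly horizontal free segment lying at height $O(1/R)$ and reflecting on $\Gamma_R$ must meet $\Gamma_1$ within $\epsilon$ of a corner, and with direction within $\epsilon$ of $\overrightarrow{AB}$ or $\overrightarrow{BA}$, once $R$ is large. This holds because $\Gamma_1$ is bounded away from the chord outside any fixed neighborhood of $\{A,B\}$ (this is where the major-arc hypothesis enters), so a low, nearly horizontal ray can only touch $\Gamma_1$ near $A$ or $B$. Applying this to the incoming and to the outgoing segments yields claim (2) at once. It also shows that $\cF^{i_0}x$ has base point $\epsilon$-close to a corner and direction $\epsilon$-close to $y_1$ or $y_2$; since $\cF(\cF^{i_0}x)=x_1\in\cM_R$ forces $\cF^{i_0}x\in\cM_1\setminus\cF^{-1}\cM_1$, we conclude $\cF^{i_0}x\in U$, which is the first half of claim (1). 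The second half follows by feeding $z=\cF^{i_0}x\in U$ into the defining property of $U$: it gives $\cF^{\eta(\cF z)+2}z\in M_n$ with $n\ge n^\ast-1=5$, and $\eta(\cF z)=\eta(x_1)=i_1$ identifies this point as $\cF^{i_0+i_1+2}x$.

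For the length estimate (3) I would argue by monotonicity in the chord direction. Consecutive reflections on the circle advance by a fixed central angle, so the reflection points $p_1,\dots,p_{i_1+1}$ on $\Gamma_R$ are ordered monotonically in $x$; together with the two nearly horizontal end segments, the whole trajectory from $\cF^{i_0}x$ to $\cF^{i_0+i_1+2}x$ is monotone in $x$ with every segment of slope $O(1/R)$. Hence its length is at most $\sqrt{1+O(1/R^2)}$ times its horizontal span, and by claim (2) that span is at most $|AB|+2\epsilon$; for $R$ large this is at most $|AB|+\epsilon$ after relabelling $\epsilon$.

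The main obstacle is the geometric claim of the second paragraph: turning ``grazing reflection on $\Gamma_R$ implies $\Gamma_1$-endpoint near a corner'' into a quantitative statement with a single rate $\epsilon(R)\to 0$ and a threshold $R_0(\epsilon)$ valid simultaneously for every $x\in M$ with $d_1\le 4$. The delicate point is to control, uniformly in the orbit, the interplay between the grazing bound $\cos\varphi_1\le 4/R$, the flattening $h_R=O(1/R)$ of $\Gamma_R$, and the definite separation of the major arc $\Gamma_1$ from the chord; once this uniformity is secured, claims (1)--(3) follow as above.
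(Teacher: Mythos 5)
Your proposal is correct and follows essentially the same route as the paper: the grazing bound $d_1\le 4$ forces the segments adjacent to $\Gamma_R$ to be nearly parallel to the chord $AB$ (the paper works with the angle to the chord direction and the bound $\sin\theta<5/R$, you with slopes and the strip of height $O(1/R)$), and the key quantitative step — that such a segment can only meet $\Gamma_1$ near a corner because $\Gamma_1$ is uniformly separated from the chord away from $\{A,B\}$ — is exactly the paper's $\theta_\ast$-contrapositive argument (a reflection point of $\Gamma_1$ not $\epsilon$-close to a corner would force $d_1\ge R\sin\theta_\ast-\tfrac12>4$). The remaining steps (membership in $U$, the defining property of $U$ giving $M_n$ with $n\ge 5$, and the length bound from near-flatness of $\Gamma_R$) match the paper's proof.
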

Note that the number 4 in `$d_1\le 4$' is chosen to simplify the presentation
of the proof of Theorem \ref{main}. The above lemma holds for any number
larger than $2|AB|$. In the following we will use $d(x)=\rho\cdot\sin\theta$,
where $\theta=\frac{\pi}{2}-\varphi$ is the angle from the direction of $x$
to the tangent line of $\Gamma_\rho$ at $x$.
\begin{proof}[Proof of Lemma \ref{largeR}]
Let $p_1$ be the reflection point of $x_1$ on $\Gamma_R$, and let $\theta_1$
be the angle between the direction of $x_1$ with the tangent line $L$ of
$\Gamma_R$ at $p_1$. Then $\sin\theta_1\le \frac{4}{R}$, since $d(x_1)=d_1\le
4$. Consider the vertical line that passes through $p_1$. Clearly the length
of the chord on $\Gamma_R$ cut out by this vertical line is less than $|AB|$,
and hence less than 1. Therefore, the angle $\theta_2$ between the vertical
direction with the tangent direction of $\Gamma_R$ at $p_1$ satisfies
$\sin\theta_2< \frac{1}{2R}$.

(1). Note that the angle $\theta$ between $\cF^{i_0}x$ and the vertical
direction is $\theta_1\pm\theta_2$, which satisfies $|\sin
\theta|\le\sin\theta_1 +\sin\theta_2< \frac{5}{R}$. Then $\cF^{i_0}x\in U$ if
$R$ is large enough. Moreover, $\cF^{i_0+i_1+2}x=\cF^{i_1+1}x_1\in
\bigcup_{n\ge 5}M_n$ by our choice of $U$.

(2). Let $P$ be a point on $\Gamma_1$, and $\theta(A)$ be the angle between
the vertical direction with the line $PA$. Similarly we define $\theta(B)$.
Let $\theta_\ast$ be the minimal angle of $\theta(A)$ and $\theta(B)$ among
all possible choices of $P$ on $\Gamma_1$ that is not $\epsilon$-close to
either $A$ or $B$. Then the angle between the vertical direction with the
line connecting $P$ to any point on $\Gamma_R$ is bounded from below by
$\theta_\ast$. Let $x_1\in\cM_R$. The angle $\theta(x_1)$ between the
direction of $x_1$ with the tangent direction of $\Gamma_R$ at the base point
of $x_1$ satisfies $\theta(x_1)\ge \theta_\ast-\arcsin \frac{1}{2R}$. Then
$d_1=R\cdot\sin\theta(x_1)\ge R\sin\theta_\ast-\frac{1}{2}$. So it suffices
to assume $R_0= \frac{5}{\sin\theta_\ast}$.

(3). By enlarging $R_0$ if necessary, the conclusion follows from (2), since
the two reflections are close to the corners and the arc $\Gamma_R$ is almost
flat.
\end{proof}

\begin{figure}[htb]
\centering
\begin{minipage}{0.45\linewidth}
\centering
\begin{overpic}[height=70mm]{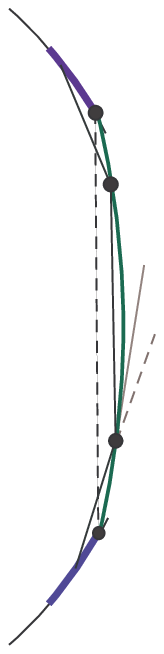}
\put(10,72){$\tau_1$}
\put(10,21){$\tau_0$}
\put(24,56){$L$}
\put(22,33){$p_1$}
\put(18,83){$A$}
\put(18,14){$B$}
\end{overpic}
\caption{The case that there are multiple reflections on $\Gamma_R$.}
\label{assume-left}
\end{minipage}
\begin{minipage}{0.45\linewidth}
\centering
\begin{overpic}[height=70mm]{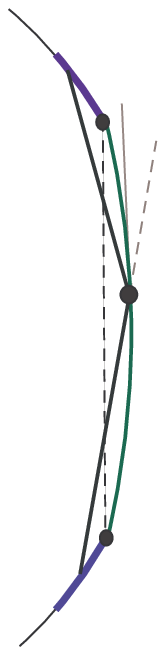}
\put(11,68){$\tau_1$}
\put(11,27){$\tau_0$}
\put(21,79){$L$}
\put(24,54){$p_1$}
\put(18,83){$A$}
\put(18,14){$B$}
\end{overpic}
\caption{The case that there is a single reflection on $\Gamma_R$.}
\label{assume-right}
\end{minipage}
\end{figure}

Now we continue the proof of Theorem \ref{main}.

(1). To verify (\textbf{A1}), we assume $x\in X_0$, which means $i_1\ge1$
(see Fig. \ref{assume-left} for an illustration). Then we have $d_1<|AB|<1$,
which implies $i_0\ge 2$ and $i_2\ge 3$. Then we have
$1-\frac{1}{2(1+i_0)}\ge \frac{5}{6}$, and $1-\frac{1}{2i_2}\ge \frac{5}{6}$.
Therefore, a sufficient condition for (\textbf{A1}) is
\begin{equation}
\tau_0< \frac{5}{6}d_0+\frac{1}{2}d_1,
\quad
\tau_1< \frac{5}{6}d_2+\frac{1}{2}d_1.
\end{equation}
Since $\tau_0,\tau_1<2d_1$, we only need to show that $\frac{3}{4}\tau_0<
\frac{5}{6}d_0$ and $\frac{3}{4}\tau_1< \frac{5}{6}d_2$, or equivalently,
$\tau_0< \frac{10}{9}d_0$ and $\tau_1< \frac{10}{9}d_2$. To this end, we
first set $R_1= 100$: then for each $R\ge R_1$, the angle $\theta$ of the
vertical direction (assume that $AB$ is vertical) and the tangent line of any
point on $\Gamma_R$ satisfies $\sin\theta<\frac{1}{2R}\le \frac{1}{200}$.
Then the angle $\theta_0$ between the vertical direction and the free path
corresponding to $\tau_0$ satisfy $\sin\theta_0\le
\sin\theta+\frac{d_1}{R}\le \frac{1}{100}$ (since $2d_1<1$). This implies
$2d_0\ge (1-0.05)\cdot|AB|$. On the other hand, by making $R_1$ even larger
if necessary, we can assume $\tau_0+\tau_1+2i_1d_1\le (1+0.05)\cdot|AB|$ for
$x\in X_0$ (see Lemma \ref{largeR}). Then we have
\[ \tau_0< \frac{\tau_0+2d_1}{2}<\frac{1.05}{2}\cdot|AB|
\le \frac{1.05}{2}\cdot\frac{2d_0}{0.95}<\frac{10}{9}d_0.
\]
Similarly, we have $\tau_1< \frac{10}{9}d_2$.

(2). To verify (\textbf{A2}), we assume $x\in X_1$, which means $i_1=0$ and
$i_2\ge 1$ (see Fig. \ref{assume-right}). Both relations in (\textbf{A2}) are
trivial if $d_1\ge 4$. In the case $d_1<4$, one must have $i_0\ge 2$, $i_2\ge
3$ (by the assumption $R>R_0$). Note that $|AB|<\tau_0+\tau_1<4d_1$. Pick
$R_2$ large enough, such that for any $R\ge R_2$, for any point $x_1\in
\cM_R$ with $d_1< 4$, the angle $\theta$ of the direction corresponding to
$x_1$ with the vertical direction is bounded by $0.05$ (recall that $AB$ is
vertical). Then we have\footnote{This is a very rough estimate, but is
sufficient for our need.} $2d_0\le \frac{4}{3}\cdot|AB|$. This implies
\[\frac{d_0}{1+i_0}\le \frac{1}{3}d_0\le \frac{2}{9}\cdot|AB|<\frac{8}{9}d_1<d_1.\]
For the second inequality in (\textbf{A2}), by making $R_2$ even larger if
necessary (see Lemma \ref{largeR}), we can assume that
$\tau_0+\tau_1\le1.04\cdot|AB|$, $2d_0,2d_2\ge 0.95\cdot|AB|$. Combining with
the fact that $d_1>|AB|/4$, we have
\[\frac{5}{6}d_0+d_1+\frac{5}{6}d_2>
\frac{5}{6}\cdot \frac{0.95}{2}\cdot|AB|+\frac{1}{4}\cdot|AB|
+\frac{5}{6}\cdot \frac{0.95}{2}\cdot|AB|
>1.04\cdot|AB|.\]
Therefore, $\tau_0+\tau_1<\frac{5}{6}d_0+d_1+\frac{5}{6}d_2$, and
(\textbf{A2}) follows.

(3). The condition (\textbf{A3}) on  $X_2$ holds trivially, since $i_2=0$
implies that $d_1\ge 4>2\tau_0$.

Then we set $R_\ast=\max\{R_i:i=0,1,2\}$. This completes the proof of Theorem
\ref{main}. \qed

\appendix

\section{A Detailed condition}
Now we give an equivalent version of Proposition \ref{suffness}, whose
formulation is longer but easier to check when proving the hyperbolicity of
billiard systems. The statement of the proposition is technical, but the
proof is straightforward.
\begin{pro}
Denote $G_0=\tau_0-d_0-\frac{i_1}{1+i_1}d_1$,
$G_1=\tau_1-\frac{i_1}{1+i_1}d_1-d_2$. Then

\vskip.2cm

\begin{itemize}
\item[(I)] $0<\cB(DF(V_x^d))<1$ if one of the following holds:

\vskip.2cm

\begin{enumerate}
\item[\textbf{(D1a)}] $G_1>0$ and $\frac{1}{G_0}>2\frac{1+i_1}{d_1}$;

\item[\textbf{(D1b)}] $G_1>0$ and
    $\frac{1}{G_0}+\frac{1}{G_1}<2\frac{1+i_1}{d_1}$;

\item[\textbf{(D1c)}] $G_1<0$ and
    $\frac{1}{G_0}+\frac{1}{G_1}<2\frac{1+i_1}{d_1}<\frac{1}{G_0}$;

\item[\textbf{(F1a)}] $G_1+\frac{d_2}{2i_2}<0$ and
    $\frac{1}{G_0}<2\frac{1+i_1}{d_1}$;

\item[\textbf{(F1b)}] $G_1+\frac{d_2}{2i_2}<0$ and
    $\frac{1}{G_0}+\frac{1}{G_1+\frac{d_2}{2i_2}}>2\frac{1+i_1}{d_1}$;

\item[\textbf{(F1c)}] $G_1+\frac{d_2}{2i_2}>0$ and
    $\frac{1}{G_0}+\frac{1}{G_1+\frac{d_2}{2i_2}}
>2\frac{1+i_1}{d_1}>\frac{1}{G_0}$;
\end{enumerate}

\vskip.2cm

\item[(II)] $0<\cB(DF(V_x^p))<1$ if one of the following holds:

\vskip.2cm

\begin{enumerate}
\item[\textbf{(D2a)}] $G_1>0$ and
    $\frac{1}{G_0+\frac{d_0}{2(1+i_0)}}>2\frac{1+i_1}{d_1}$;

\item[\textbf{(D2b)}] $G_1>0$ and
    $\frac{1}{G_0+\frac{d_0}{2(1+i_0)}}+\frac{1}{G_1}<2\frac{1+i_1}{d_1}$;

\item[\textbf{(D2c)}] $G_1<0$ and $\frac{1}{G_0+\frac{d_0}{2(1+i_0)}}
    +\frac{1}{G_1}
    <2\frac{1+i_1}{d_1}<\frac{1}{G_0+\frac{d_0}{2(1+i_0)})}$;
\item[\textbf{(F2a)}] $G_1+\frac{d_2}{2i_2}<0$ and
    $\frac{1}{G_0+\frac{d_0}{2(1+i_0)}}<2\frac{1+i_1}{d_1}$;

\item[\textbf{(F2b)}] $G_1+\frac{d_2}{2i_2}<0$ and
    $\frac{1}{G_0+\frac{d_0}{2(1+i_0)}}
    +\frac{1}{G_1+\frac{d_2}{2i_2}}>2\frac{1+i_1}{d_1}$;

\item[\textbf{(F2c)}] $G_1+\frac{d_2}{2i_2}>0$ and
    $\frac{1}{G_0+\frac{d_0}{2(1+i_0)}}
    +\frac{1}{G_1+\frac{d_2}{2i_2}}
>2\frac{1+i_1}{d_1}>\frac{1}{G_0+\frac{d_0}{2(1+i_0)}}$;
\end{enumerate}

\vskip.2cm

\item[(III)] Eq. \eqref{check} holds if and only if one of the following
    holds:

    \vskip.2cm

\begin{enumerate}
\item[\textbf{(P1a)}]
    $\tau_0<(1-\frac{1}{2(1+i_0)})d_0+(1-\frac{1}{2(1+i_1)})d_1$;

\item[\textbf{(P1b)}]   $\tau_0>d_0+(1-\frac{1}{2(1+i_1)})d_1$.
\end{enumerate}
\end{itemize}
\end{pro}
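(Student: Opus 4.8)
The plan is to reduce every statement to the master identity established inside Proposition \ref{suffness}, namely that the two curvatures are the continued fractions
\[
\cB(DF(V_x^d))=\Big[d_2,\tfrac{2i_2}{d_2},\bar\tau_1,-\tfrac{2}{\hat d_1},G_0\Big],\qquad
\cB(DF(V_x^p))=\Big[d_2,\tfrac{2i_2}{d_2},\bar\tau_1,-\tfrac{2}{\hat d_1},G_0'\Big],
\]
where I abbreviate $G_0=\tau_0-d_0-\frac{i_1}{1+i_1}d_1$, $G_0'=G_0+\frac{d_0}{2(1+i_0)}$, $G_1=\bar\tau_1$, and $a=\frac{2}{\hat d_1}=\frac{2(1+i_1)}{d_1}>0$. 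Each inequality of Proposition \ref{suffness} then becomes a sign condition on one tail continued fraction, and the whole task is to clear these fractions into the flat inequalities listed in the statement.

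First I would treat (I) and (II) together, since they differ only by the replacement $G_0\mapsto G_0'$. Writing $W_d=G_1+[-a,G_0]=G_1+\frac{1}{\frac1{G_0}-a}$, conditions \textbf{(D1)} and \textbf{(F1)} read $W_d>0$ and $W_d<-\frac{d_2}{2i_2}$. The only analytic input is that reciprocating an inequality reverses it precisely when the two sides share a sign; so I would split on $\mathrm{sign}(G_1)$ (resp. $\mathrm{sign}(G_1+\frac{d_2}{2i_2})$) and on whether $\frac1{G_0}\gtrless a$. Each sign pattern, after clearing the inner fraction, yields exactly one of \textbf{(D1a)}--\textbf{(D1c)}, the degenerate boundary cases collapsing so that the survivors are mutually exclusive and jointly exhaust $W_d>0$. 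The identical bookkeeping with $G_0'$ in place of $G_0$ produces \textbf{(D2a)}--\textbf{(D2c)}, and the shift by $\frac{d_2}{2i_2}$ produces \textbf{(F1a)}--\textbf{(F1c)} and \textbf{(F2a)}--\textbf{(F2c)}. In each case one must check that the stated inequality already forces the sign of $\frac1{G_0}-a$ under which it was derived (for instance \textbf{(D1b)} forces $\frac1{G_0}<a$), which is what makes the list exhaustive without redundant hypotheses.

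For (III) I would exploit that the two sides of \eqref{check} are the images of $G_0'$ and $G_0$ under a single M\"obius map built from $t\mapsto[-a,t]=\frac{t}{1-at}$, whose derivative $\frac{1}{(1-at)^2}>0$ and whose unique pole sits at $t=\frac1a=\frac{d_1}{2(1+i_1)}$. Hence the comparison in \eqref{check} is decided entirely by whether $G_0$ and $G_0'$ lie on the same branch of this map, i.e. on the same side of $\frac1a$; clearing denominators turns it into the sign condition $(1-aG_0)(1-aG_0')>0$. Since $G_0<G_0'$, this splits into the two configurations "both below the pole" ($G_0'<\frac1a$) and "both above the pole" ($G_0>\frac1a$), the straddling range being excluded. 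Substituting back and using the identity $\frac{i_1}{1+i_1}d_1+\frac{d_1}{2(1+i_1)}=(1-\frac{1}{2(1+i_1)})d_1$, the inequality $G_0'<\frac1a$ becomes exactly \textbf{(P1a)} and $G_0>\frac1a$ becomes \textbf{(P1b)}.

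The hard part will be the sign bookkeeping in the reciprocation step: tracking which way each inequality flips and verifying that every labeled subcase silently records the sign regime from which it was extracted, so that the six alternatives in (I) (and the six in (II)) are exhaustive and pairwise disjoint. For part (III) the corresponding delicate point is the branch/pole analysis of the M\"obius map: one must confirm that \eqref{check}, read in the circular order on $\mathbb{R}\cup\{\infty\}$ inherited from Proposition \ref{order}, translates to $(1-aG_0)(1-aG_0')>0$ rather than to its complement, since crossing the pole $t=\frac1a$ is exactly what reverses the naive real-line comparison.
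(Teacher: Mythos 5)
Your proposal is correct and follows essentially the same route as the paper: for (I) and (II) the paper likewise sets $Y=\frac{1}{G_0}-2\frac{1+i_1}{d_1}$ (and its primed/shifted variants) and enumerates the sign cases of $G_1+\frac1Y>0$, which is exactly your split on $\mathrm{sign}(G_1)$ and $\frac1{G_0}\gtrless \frac{2(1+i_1)}{d_1}$. For (III) your M\"obius-branch criterion $(1-aG_0)(1-aG_0')>0$ is precisely the paper's reduction of \eqref{check} to $\frac1X<\frac1Y$ with $X=\frac{1}{G_0}-a$, $Y=\frac{1}{G_0'}-a$ followed by its three-case trichotomy (the paper then recombines the resulting $\tau_0$-intervals into \textbf{(P1a)}--\textbf{(P1b)}), so the two arguments coincide up to packaging — including sharing the same unaddressed delicacy about the orientation of the outer map $s\mapsto\frac{1}{G_1+s}$ that you correctly flag.
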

\begin{proof}
(1). To show \textbf{(D1)}, we let
$Y=\frac{1}{\tau_0-d_0-\frac{i_1}{1+i_1}d_1}-2\frac{1+i_1}{d_1}$. Then
\textbf{(D1)} can be rewritten as $G_1+\frac{1}{Y}>0$, which holds if and
only if one of the following holds:
\begin{enumerate}
\item[(1.1)] $G_1>0$ and $Y>0$;

\item[(1.2)] if $G_1>0$ and $Y<0$, then $\frac{1}{G_1}<-Y$;

\item[(1.3)] if $G_1<0$ and $Y>0$, then $Y<-\frac{1}{G_1}$.
\end{enumerate}
The three conditions \textbf{(D1a)}--\textbf{(D1c)} are evidently equivalent
to three items (1.1)--(1.3), respectively. The verifications of \textbf{(D2)}
and \textbf{(F1)}-\textbf{(F2)} are reduced to the similar calculations and
hence are omitted here.

\vskip.3cm

(2). Let $X=\frac{1}{G_1}-2\frac{1+i_1}{d_1}$ and
\begin{equation}\label{app-B}
Y=\frac{1}{\tau_0-(1-\frac{1}{2(1+i_0)})d_0-\frac{i_1}{1+i_1}d_1}-2\frac{1+i_1}{d_1}.
\end{equation}
Then Eq. \eqref{check} can be rewritten as $\frac{1}{X}<\frac{1}{Y}$, which
is true if and only if one of the following holds:
\begin{enumerate}
\item[(2.1)] if $X<0$ and $Y<0$, then $X>Y$;

\item[(2.2)] if $X>0$ and $Y>0$, then $X>Y$;

\item[(2.3)] $X<0$ and $Y>0$.
\end{enumerate}
Then plugging in the formulas $X=\frac{1}{G_1}-2\frac{1+i_1}{d_1}$ and Eq.
\eqref{app-B} for $Y$, we get that
\begin{enumerate}
\item[(2.1.1)] $\tau_0<(1-\frac{1}{2(1+i_0)})d_0+\frac{i_1}{1+i_1}d_1$;

\item[(2.1.2)] $\tau_0>d_0+(1-\frac{1}{2(1+i_1)})d_1$;

\item[(2.2$'$)]
    $d_0+\frac{i_1}{1+i_1}d_1<\tau_0<(1-\frac{1}{2(1+i_0)})d_0+(1-\frac{1}{2(1+i_1)})d_1$;

\item[(2.3$'$)]
    $(1-\frac{1}{2(1+i_0)})d_0+\frac{i_1}{1+i_1}d_1<\tau_0<\min\{d_0+\frac{i_1}{1+i_1}d_1,
    (1-\frac{1}{2(1+i_0)})d_0+(1-\frac{1}{2(1+i_1)})d_1\}$.
\end{enumerate}
Note that Condition (2.2$'$) is nonempty if and only if
$\frac{d_0}{1+i_0}<\frac{d_1}{1+i_1}$. However, we can always combine
(2.1.1)-(2.2$'$)-(2.3$'$) into one condition:
\[\tau_0<(1-\frac{1}{2(1+i_0)})d_0+(1-\frac{1}{2(1+i_1)})d_1.\]
Therefore we get \textbf{(P1a)-(P1b)}.
\end{proof}

\section*{Acknowledgements}

L.B. is supported in part by  NSF grant DMS-1265883. H.Z. is supported in
part by NSF CAREER grant DMS-1151762. H.Z. would like to thank U. Houston, at
where part of the work was done. P.Z. would like to thank UMass Amherst, at
where part of the work was done. The authors are grateful to the referees for
their careful readings, useful comments and suggestions which helped them to
improve the readability of this paper significantly.

\end{document}